\newtheorem{theorem}{Theorem}[section]
\newtheorem{corollary}{Corollary}[theorem]
\newtheorem{lemma}[theorem]{Lemma}
\newtheorem{conjecture}[theorem]{Conjecture}
\newtheorem{definition}[theorem]{Definition}
\theoremstyle{definition}
\begin{document}

\title{Density of Numerical sets associated to a Numerical semigroup}

\author{Deepesh Singhal}
\address{Hong Kong University of Science and Technology, Hong Kong}
\email{dsinghal@connect.ust.hk}
\author{Yuxin Lin}
\address{University of Notre Dame, USA}
\email{ylin9@nd.edu}

\begin{abstract}
A numerical set is a co-finite subset of the natural numbers that contains zero. Its Frobenius number is the largest number in its complement. Each numerical set has an associated semigroup $A(T)=\{t\mid t+T\subseteq T\}$, which has the same Frobenius number as $T$. For a fixed Frobenius number $f$ there are $2^{f-1}$ numerical sets.
It is known that there is a number $\gamma$ close to $0.484$ such that the ratio of these numerical sets that are mapped to $N_f=\{0\}\cup(f,\infty)$ is asymptotically $\gamma$.
We identify a collection of families $N(D,f)$ of numerical semigroups such that for a fixed $D$ the ratio of the $2^{f-1}$ numerical sets that are mapped to $N(D,f)$ converges to a positive limit as $f$ goes to infinity.
We denote the limit as $\gamma_D$, these constants sum up to $1$ meaning that they asymptotically account for almost all numerical sets.
\end{abstract}

\maketitle
{\bf Keywords:} Numerical set, Numerical semigroup, Associated semigroup, Atomic monoid.


\section{Introduction}
$\mathbb{N}$ is the set of non-negative integers. A numerical set is a subset of $\mathbb{N}$ that contains zero and whose complement is finite. For example $\{0, 2,3,6 \rightarrow\}$ is a numerical set, the $\rightarrow$ indicates that every integer larger than 6 is in the numerical set. The largest number in the complement of a numerical set $T$ is called its Frobenius number, it is denoted by $f(T)$. For a fixed Frobenius number $f$ we have $2^{f-1}$ numerical sets. 

A numerical semigroup is a numerical set that is closed under addition. The exact number of numerical semigroups with a fixed Frobenius number is hard to evaluate, but \cite{Becklin} showed that it is of the order $2^{\overline{f}}$, where $\overline{f}=\lfloor\frac{f-1}{2}\rfloor$. Each numerical set has an associated semigroup defined as $A(T)=\{t\mid t+T\subseteq T\}$, where $t+T=\{t+x\mid x\in T\}$. It is easy to see that $A(T)$ is actually a numerical semigroup and it has the same Frobenius number as that of $T$. Given a numerical semigroup $S$ the number of numerical sets that have $S$ as their associated semigroup is denoted by $P(S)$.

If we consider all numerical semigroups with Frobenius number $f$, it is clear that
$$\sum_{f(S)=f}P(S)=2^{f-1}.$$
We define the density of a numerical semigroup $S$ (with $f(S)=f$) as $\mu(S)=\frac{P(S)}{2^{f-1}}$. The density of $S$ is the fraction of numerical sets with Frobenius number $f(S)$ that are mapped to $S$.
If we give a partial ordering by set theoretic containment on the set of numerical semigroups with Frobenius number $f$, it is observed that a few semigroups at the bottom have larger density and all of the other numerical semigroups have a very small density.
To illustrate this we consider numerical semigroups with Frobenius number $9$, there are $21$ of them. In the diagram below each semigroup is labeled by $S\cap[1,8]$, and its density is mentioned.

\includegraphics[width=0.9\textwidth]{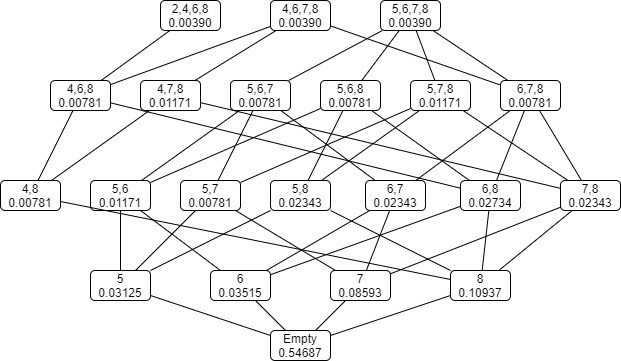}

Numbers in $S\cap[1,f(S)]$ are called small atoms of $S$. Note that for any $f$, the poset has a unique minimal semigroup $N_f=\{0,f+1\rightarrow\}$ and it has no small atoms.
In Marzuola and Miller's paper \cite{Miller} they compute $\mu(N_f)$ and prove that the limit
$\lim_{f\to\infty}\mu(N_f)$ exists and is positive.
This limit is denoted by $\gamma$ is approximately $0.484451$ with an error of at most $\pm 0.005011$. This means that a little less than half of the numerical sets are mapped to this numerical semigroup.

In \cite{Grobenius}, the authors study the density of numerical semigroups with one small atom and conjectured a similar result.
We denote numerical semigroups with one small atom as $N_{l,f}=\{0,f-l,f+1\rightarrow\}$, they conjectured that for any fixed $l$ the limit
$\lim_{f\to\infty}\mu(N_{l,f})$ exists.

We prove this conjecture and extend it to the case of $n$ small atoms.
Given a finite set $D$ of positive integers and a positive integer $f$ we define
$$N(D,f)=\{0\}\cup \{f-l\mid l\in D\} \cup \{f+1\rightarrow\}.$$
Note that if $f>\text{Max}(D)$ then $N(D,f)$ is a numerical set with Frobenius number $f$.
Moreover if $f>2Max(D)$ then all non zero elements of $N(D,f)$ are bigger than $\frac{f}{2}$, hence $N(D,f)$ is closed under addition and therefore is a numerical semigroup. It has $|D|$ small atoms.
Our main result is the following.
\begin{theorem}\label{lim n small atoms}
For a fixed finite subset of positive integers $D$ the following limit exists and is positive
$$\lim_{f\to\infty}\mu(N(D,f)).$$ 
\end{theorem}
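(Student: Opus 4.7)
Let $\bar{f} = \lfloor (f-1)/2 \rfloor$, and for each numerical set $T$ of Frobenius number $f$ decompose $T = \{0\} \sqcup L \sqcup U \sqcup \{f+1, f+2, \ldots\}$ where $L = T \cap [1, \bar f]$ and $U = T \cap [\bar f+1, f-1]$. Every pair $(L, U)$ arises from exactly one such $T$, so drawing $T$ uniformly from the $2^{f-1}$ numerical sets of Frobenius number $f$ amounts to drawing $L$ and $U$ uniformly and independently. I would reformulate $A(T) = N(D, f)$ as a conjunction, over $k \in [1, \bar f]$, of a \emph{high} requirement ``$f - k \in A(T)$ iff $k \in D$'' and a \emph{low} requirement ``$k \notin A(T)$''. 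The high requirement unpacks to ``$f-k \in T$ and $f-k+x \in T$ for every $x \in T \cap [1,k]$''; taking $x = k$ already forces $k \notin L$ whenever $k \in D$, and in general the high requirement couples a window of $U$ near the top to the values of $L \cap [1, k]$.

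Setting $M = \max D$, I would condition on the prefix $L_0 := L \cap [1, M]$, retaining only the $2^{M-|D|}$ prefixes with $L_0 \cap D = \emptyset$. Fix such an $L_0$. The high requirements for $k \in [1, M]$ become a finite, $L_0$-dependent Boolean constraint on $U \cap [f - M, f - 1]$: an $L_0$-determined subset of this window is forced into $U$ (coming from $k \in D$), while the conditions from $k \in [1, M] \setminus D$ impose a finite disjunctive constraint. The conditional probability that $U \cap [f-M, f-1]$ satisfies this is a constant $\beta_{L_0, D} \in (0, 1]$ independent of $f$. For the high requirements at $k \in (M, \bar f] \setminus D$, I would show that they are jointly satisfied with positive limiting probability: each fails only when $f - k \in U$ and $f - k + x \in U$ for every $x \in L \cap [1, k]$, an event whose probability decays rapidly in $k$, so a union-bound/product argument produces a positive limit.

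The low requirements $k \notin A(T)$ for $k \in [1, \bar f]$ are of the form treated by Marzuola and Miller in \cite{Miller}. Their argument, which shows that the density of $T$ satisfying all low requirements converges to $\gamma$, should carry over after accounting for the finitely many forced/forbidden entries of $U$ coming from $L_0$: a bounded perturbation of $U$ changes the supply of witness pairs for each low $k$ by $O(1)$, which is negligible compared with the total supply. Combining the three ingredients yields
$$\lim_{f \to \infty} \mu(N(D, f)) = \sum_{L_0 \subseteq [1, M],\ L_0 \cap D = \emptyset} 2^{-M}\, c_{L_0, D} =: \gamma_D,$$
where $c_{L_0, D}$ is the limiting joint probability of the high and low conditions given $L \cap [1, M] = L_0$. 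Positivity follows because at least one compatible $L_0$ contributes $c_{L_0, D} > 0$. The main obstacle is the decoupling step: rigorously justifying that the $L_0$-induced modifications of $U$ neither affect the Marzuola--Miller limit for the low conditions nor correlate non-trivially with the large-$k$ high conditions. This will require careful quantitative bounds on correlations between the high- and low-atom events, finer than those needed in \cite{Miller} for $D = \emptyset$.
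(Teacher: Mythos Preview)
Your plan heads in a plausible direction, but the decoupling step you flag as ``the main obstacle'' is exactly where the proof lives, and you have not supplied it. As written, existence of the limit is assumed rather than proved: you posit $c_{L_0,D}$ as a ``limiting joint probability'' without establishing that any such limit exists, and the union-bound/product remark for the high conditions at $k>M$ at best bounds the failure probability away from $1$ (a positive $\liminf$ for that piece alone, not convergence). Positivity is likewise only asserted: ``at least one compatible $L_0$ contributes $c_{L_0,D}>0$'' presupposes precisely what has to be shown. The high and low events share both $L$ and $U$ in overlapping windows, so your appeal to Marzuola--Miller for the low conditions ``after accounting for finitely many forced/forbidden entries of $U$'' is not a bounded perturbation in any clear sense---the forced entries sit at the very top of $U$, where every low-$k$ condition looks for its witnesses.

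The paper sidesteps this entanglement by a different mechanism. The key device is Lemma~\ref{another definition}: whether $f-x\in A(T)$ for $x\le k$ depends only on $T\cap[1,k]$ and $T\cap[f-k,f-1]$. Setting $t=\max D$ and $B(D,f)=\{T: A(T)\cap[f-t,f-1]=\{f-l:l\in D\}\}$, one gets $|B(D,f)|=A_D\,2^{f-2t-1}$ for a constant $A_D$ independent of $f$. Then $P(N(D,f))$ is written \emph{exactly} as $|B(D,f)|-\sum_{k=t+1}^{\bar{f}}|B(D\cup\{k\},f)|$ minus a remainder of size at most $f\cdot 3^{f/2}$, yielding the convergent series $\gamma_D=A_D4^{-t}-\sum_{k>t}A_{D\cup\{k\}}4^{-k}$; existence of the limit is immediate from the bound $A_{D\cup\{k\}}\le 3^{k-1}$. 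Positivity is handled by a separate injection: one first shows that $G_t(f')=\{T:A(T)=N_{f'},\ T\cap[1,t]=\emptyset\}$ has positive limiting density (again via an explicit series with computable constants $C_{t,k}$), and then injects $G_t(f-t-1)$ into $\{T:A(T)=N(D,f)\}$ by appending the block $\{f-l:l\in D\}$ on top. No correlation estimates between high- and low-atom events are ever needed.
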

It is proved in Theorem \ref{Main Result} and Theorem \ref{lims are positive}.
We denote the limit by $\gamma_{D}$. We list several of the values in Table \ref{t1}.

\begin{table}[h]
    \centering
    \begin{tabular}{|c|c|c|c|c|c|}
        \hline
        $D$ & $\gamma_{D}$ & $D$ & $\gamma_{D}$ & $D$ & $\gamma_{D}$\\
        \hline
        $\emptyset$ & $0.48660$ & 6 & $0.00700$ & 1,6 & $0.00186$  \\
        1 & $0.09476$ & 3,4 & $0.00443$ & 2,6 & $0.00174$ \\
        2 & $0.06079$ & 7 & $0.00435$ &1,2,3 & $0.00152$ \\
        3 & $0.02538$ & 2,5 & $0.00400$ &4,5 & $0.00132$ \\
        1,3 & $0.02035$ &  1,3,5 & $0.00332$ & 9 & $0.00131$  \\
        4 & $0.01793$ &  1,2,5 & $0.00280$ & 2,3,4 & $0.00106$ \\
        1,2 & $0.01683$ & 8 & $0.00269$ &1,2,6 & $0.00091$ \\
        2,3 & $0.01205$&  1,2,4 & $0.00228$& 1,3,4 & $0.00068$ \\
        1,4 & $0.01184$ &  1,5 & $0.00200$ &&\\
        5 & $0.01017$ &2,4 & $0.00191$ & &\\
        \hline
    \end{tabular}
    \caption{Values of limits with error at most $\pm 0.00212$}
    \label{t1}
\end{table}

We prove in Corollary \ref{Sum to 1} that
$$\sum_{|D|<\infty}\gamma_D=1.$$
This means that given $\epsilon>0$ one can find a finite collection of $D$s for which the sum of $\gamma_D$ is bigger than $1-\epsilon$. It follows that for sufficiently large $f$ the combined density of $N(D,f)$ for these $D$s is bigger than $1-\epsilon$ and in turn the combined density of all other numerical semigroups of Frobenius number $f$ is at most $\epsilon$.

In the following two theorems we investigate, given a general sequence of numerical semigroups when is their density positive in the limit. It is related to difference between the Frobenius number and the smallest positive element being bounded.
The smallest positive element is called the multiplicity and is denoted by $m(S)$. We define 
$R(S)=f(S)-m(S)$.

\begin{theorem}\label{positive liminf}
Given a sequence of numerical semigroups $S_n$, $R(S_n)$ is bounded if and only if
$$\liminf_{n\to\infty}\mu(S_n)>0.$$
\end{theorem}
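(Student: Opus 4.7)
The strategy is to reduce both directions to Theorem~\ref{lim n small atoms} and Corollary~\ref{Sum to 1} using the observation that every numerical semigroup $S$ with Frobenius number $f$ has the form $N(D,f)$ where $D = D(S) := \{f-s \mid s\in S,\ 1\le s\le f-1\}$, and that, when $S\neq N_f$, the largest element of $D(S)$ equals $f-m(S) = R(S)$. Consequently, a bound $R(S_n)\le R_0$ translates into $D(S_n)\subseteq [1,R_0]$, so the $D(S_n)$ come from the finite family $\mathcal{P}([1,R_0])$.

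For the forward direction, assume $R(S_n)\le R_0$ and choose a subsequence $S_{n_k}$ with $\mu(S_{n_k})\to \liminf_n\mu(S_n)$. If the Frobenius numbers $f(S_{n_k})$ are bounded, only finitely many distinct semigroups occur, and each satisfies $\mu(S)\ge 2^{-(f(S)-1)}>0$ because $T=S$ itself already obeys $A(T)=S$; hence the liminf is positive. Otherwise, extract a further subsequence with $f(S_{n_k})\to\infty$ and, since there are only finitely many possible $D(S_{n_k})\subseteq [1,R_0]$, refine once more so that $D(S_{n_k})$ equals a fixed $D$. Theorem~\ref{lim n small atoms} then gives $\mu(S_{n_k})=\mu(N(D,f(S_{n_k})))\to \gamma_D>0$, so $\liminf_n\mu(S_n)\ge \gamma_D>0$.

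For the reverse direction the plan is to argue the contrapositive. Assume $R(S_n)$ is unbounded and pass to a subsequence (still denoted $S_n$) with $R(S_n)\to\infty$; note $f(S_n)\ge R(S_n)+1\to\infty$ as well. Given $\epsilon>0$, Corollary~\ref{Sum to 1} furnishes a finite collection $\mathcal{D}$ of finite subsets of $\mathbb{N}$ with $\sum_{D\in\mathcal{D}}\gamma_D>1-\epsilon/2$; set $R_0 = \max\{\max D \mid D\in\mathcal{D}\}$. For $n$ large enough that $R(S_n)>R_0$ and $f(S_n)>2R_0$, the set $D(S_n)$ lies outside $\mathcal{D}$, each $N(D,f(S_n))$ with $D\in\mathcal{D}$ is a genuine numerical semigroup, and Theorem~\ref{lim n small atoms} applied to the finitely many $D\in\mathcal{D}$ gives $\sum_{D\in\mathcal{D}}\mu(N(D,f(S_n)))\to \sum_{D\in\mathcal{D}}\gamma_D>1-\epsilon/2$. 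Since the total density over all semigroups of Frobenius number $f(S_n)$ is $1$, the tail sum $\sum_{D\notin\mathcal{D}}\mu(N(D,f(S_n)))$ is eventually less than $\epsilon$; and $\mu(S_n)=\mu(N(D(S_n),f(S_n)))$ is one nonnegative summand of this tail, so $\mu(S_n)<\epsilon$ eventually. Thus $\mu(S_n)\to 0$, contradicting $\liminf_n \mu(S_n)>0$.

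The proof is essentially a clean accounting argument; its real content sits in the two inputs, Theorem~\ref{lim n small atoms} and Corollary~\ref{Sum to 1}. The only delicate bookkeeping I anticipate is the careful separation of the ``$f$ bounded'' and ``$f\to\infty$'' cases in the forward direction, together with verifying that each $D\in\mathcal{D}$ eventually produces a legitimate semigroup $N(D,f)$ once $f$ exceeds $2R_0$ in the reverse direction.
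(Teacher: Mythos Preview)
Your argument is correct and follows essentially the same route as the paper: the forward direction partitions the sequence according to the finitely many possible $D\subseteq[1,R_0]$ and invokes Theorem~\ref{lim n small atoms}, while the reverse direction is the contrapositive obtained from the sum-to-one identity (the paper phrases it via the aggregates $\alpha_n=\sum_{\max D=n}\gamma_D$ rather than the individual $\gamma_D$, but this is cosmetic). Your explicit treatment of the bounded-$f$ case in the forward direction is in fact a bit more careful than the paper's sketch.
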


\begin{theorem}
Given a sequence of numerical semigroups $S_n$, $R(S_n)$ goes to $\infty$ if and only if
$$\lim_{n\to\infty}\mu(S_n)=0.$$
\end{theorem}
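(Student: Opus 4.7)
The plan is to deduce this theorem as a formal consequence of Theorem \ref{positive liminf} by passing to subsequences. The key observation is that any subsequence of a sequence of numerical semigroups is itself a sequence of numerical semigroups, so Theorem \ref{positive liminf} applies to subsequences as well.

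For the forward direction, I would argue by contrapositive: suppose $\mu(S_n)\not\to 0$, i.e.\ there exists $\varepsilon>0$ and a subsequence $(S_{n_k})$ with $\mu(S_{n_k})>\varepsilon$ for all $k$. Then $\liminf_{k\to\infty}\mu(S_{n_k})\geq \varepsilon>0$, so by the ``liminf positive $\Rightarrow$ $R$ bounded'' direction of Theorem \ref{positive liminf} applied to the subsequence, $R(S_{n_k})$ is bounded. Hence $R(S_n)$ admits a bounded subsequence and cannot tend to infinity.

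For the reverse direction, again by contrapositive: suppose $R(S_n)\not\to\infty$, so there exists a constant $M$ and a subsequence $(S_{n_k})$ with $R(S_{n_k})\leq M$. Applying the ``$R$ bounded $\Rightarrow$ liminf positive'' direction of Theorem \ref{positive liminf} to this subsequence yields $\liminf_{k\to\infty}\mu(S_{n_k})>0$. In particular there is a further subsequence along which $\mu$ stays bounded away from $0$, so $\mu(S_n)\not\to 0$.

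There is no real obstacle here beyond correctly invoking Theorem \ref{positive liminf} on subsequences; the substantive work lies in that earlier theorem. The only point requiring care is that Theorem \ref{positive liminf} is stated for arbitrary sequences of numerical semigroups, so it is legitimate to apply it to any subsequence we extract, and both implications of that biconditional are needed—one for each direction of the present theorem.
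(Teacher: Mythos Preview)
Your argument is logically sound given Theorem~\ref{positive liminf}, and your treatment of the reverse direction (the contrapositive ``$R(S_n)\not\to\infty$ implies $\mu(S_n)\not\to 0$'') is essentially identical to the paper's Corollary~\ref{lim converse}.

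The forward direction is where you diverge. You argue by contrapositive and invoke the ``$\liminf\mu>0 \Rightarrow R$ bounded'' half of Theorem~\ref{positive liminf}. The paper instead proves this implication directly as Theorem~\ref{thm lim}, using the identity $\sum_n \alpha_n = 1$: given $\epsilon>0$, choose $N$ with $\sum_{n>N}\alpha_n < \epsilon$; then for large $f$ also $\sum_{n>N}\alpha_n(f)<2\epsilon$, so once $R(S_n)>N$ one has $\mu(S_n)\leq \alpha_{R(S_n)}(f(S_n))<2\epsilon$.

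There is a genuine circularity concern with your route in the context of this paper. The half of Theorem~\ref{positive liminf} you need for the forward direction is Corollary~\ref{liminf converse}, and the paper proves that corollary \emph{from} Theorem~\ref{thm lim}, which is precisely the forward implication of the present theorem. So, as the paper is organized, your forward-direction argument would be circular. The two biconditionals (Theorem~\ref{positive liminf} and the present one) are indeed interderivable by subsequence arguments, but the paper's two results with independent content are Theorem~\ref{thm lim} and Theorem~\ref{thm liminf}; both biconditionals are then assembled from these. Your reduction is correct in the abstract, but it hides exactly the piece of substance---the direct estimate via $\sum\alpha_n=1$---that the paper supplies for this direction.
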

\noindent These two theorems are proved in Theorem \ref{thm lim}, Corollary \ref{liminf converse}, Theorem \ref{thm liminf} and Corollary \ref{lim converse}.
Given any numerical semigroup $S$, we can uniquely express it as $N(D,f)$, indeed $D=D(S)=\{f-s\mid s\in S,1\leq s\leq f \}$ and $f=f(S)$.
It follows that $m(S)=f-Max(D)$ i.e. $Max(D)=R(S)$.
Notice that $R(S_n)$ is bounded if and only if the set $\{D(S_n)\mid n\in\mathbb{N}\}$ is finite i.e. the sequence $S_n$ is eventually a combination of sequences $N(D,f)$ for finitely many $D$.

\section{numerical semigroups with very small atoms}
A very small atom of $S$ is a positive element of $S$ which is smaller than $\frac{f(S)}{2}$.
In this section we show that the combined density of numerical semigroups which have very small atoms is small. This will be needed later.

\begin{theorem}\label{thm6}
For fixed $\epsilon>0$ (and $<1$) \[\lim_{f\to\infty}\frac{\#\{T\mid f(T)=f, m(A(T))\leq (1-\epsilon)f\}}{2^{f-1}}=0.\]
\end{theorem}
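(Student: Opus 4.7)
The plan is to apply a union bound over the candidate value $a$ of the multiplicity $m(A(T))$. Since $m(A(T))\le(1-\epsilon)f$ is equivalent to $A(T)\cap[1,(1-\epsilon)f]\ne\emptyset$,
\[
\#\{T:f(T)=f,\ m(A(T))\le(1-\epsilon)f\}\le\sum_{a=1}^{\lfloor(1-\epsilon)f\rfloor}N_a,\qquad N_a:=\#\{T:f(T)=f,\ a\in A(T)\},
\]
so it suffices to prove $\sum_a N_a=o(2^{f-1})$. For fixed $a$ the condition $a+T\subseteq T$ is equivalent to $T^c$ being closed under subtracting $a$, so in every residue class $r\pmod a$ the gaps of $T$ contained in $[1,f]$ form an initial segment of the arithmetic progression $\{r,r+a,r+2a,\dots\}\cap[1,f]$. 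Writing $n_r$ for the number of integers in $[1,f]$ congruent to $r\pmod a$, there are $n_r+1$ choices per class, and the Frobenius condition $f\in T^c$ forces the entire progression in class $r^*:=f\bmod a$ to consist of gaps. Hence
\[
N_a=\prod_{r\ne r^*}(n_r+1).
\]

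Next I estimate this product. With $q=\lfloor f/a\rfloor$ and $s=f-qa$, one has $n_r\in\{q,q+1\}$ with exactly $s$ of the $n_r$ equal to $q+1$, and the product simplifies to $N_a=(q+1)^{a-s}(q+2)^{s-1}$ when $s\ge 1$ (and $(q+1)^{a-1}$ when $s=0$). On each strip $a\in(f/(q+1),f/q]$ the function $\log_2 N_a$ is linear in $a$ with positive slope $(q+1)\log_2(q+1)-q\log_2(q+2)$, so it is maximized at the right endpoint $a=f/q$, giving $\log_2 N_a\le\frac{\log_2(q+1)}{q}f+O(1)$. For $q\ge 2$ this is bounded by $\tfrac12(\log_2 3)f<0.8f$. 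In the remaining regime $q=1$ (so $f/2<a<f$), an explicit computation yields $N_a=2^{2a-f}\cdot 3^{f-a-1}$, hence $\log_2 N_a=\bigl(1-(2-\log_2 3)(1-a/f)\bigr)f+O(1)$; restricting to $a\le(1-\epsilon)f$ yields $\log_2 N_a\le\bigl(1-(2-\log_2 3)\epsilon\bigr)f+O(1)$.

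Combining the two regimes, there exists $c(\epsilon)>0$ (namely $c(\epsilon)=\min\!\bigl((2-\log_2 3)\epsilon,\,1-\tfrac12\log_2 3\bigr)$) with $N_a\le 2^{(1-c(\epsilon))f+O(1)}$ uniformly for $a\le(1-\epsilon)f$, so $\sum_a N_a\le f\cdot 2^{(1-c(\epsilon))f+O(1)}$ and the desired ratio is $O\!\bigl(f\cdot 2^{-c(\epsilon)f}\bigr)\to 0$. The main obstacle is keeping the exponent strictly below $f$ near $a=(1-\epsilon)f$: a crude bound $N_a\le(f/a+O(1))^a$ already reaches $2^{f-1}$ at $a\approx f/2$, so one really needs to use (i) that one full residue class is forced by the Frobenius condition, cutting the product to $a-1$ factors, and (ii) the explicit $n_r\in\{q,q+1\}$ structure, which together give the clean decay $2^{-c(\epsilon)f}$.
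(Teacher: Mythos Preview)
Your approach is essentially the paper's: a union bound over the candidate small element $a\in A(T)$, followed by the residue-class count that produces the product $(q+2)^{s-1}(q+1)^{a-s}$. The only difference is in how that product is then estimated: the paper bounds it in one stroke via AM--GM by $\bigl(1+\tfrac{f}{a-1}\bigr)^{a-1}$ and then invokes the monotonicity of $x\mapsto(1+f/x)^{x}$ to get a uniform bound $\bigl(1+\tfrac{1}{1-\epsilon}\bigr)^{(1-\epsilon)f}$, which is quicker than your strip-by-strip case analysis on $q=\lfloor f/a\rfloor$ (though your route yields a marginally sharper exponent $c(\epsilon)$).

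One small slip worth fixing: your displayed equality $N_a=\prod_{r\ne r^*}(n_r+1)$ should be an inequality. The residue class $r=0$ is \emph{also} forced, since $0\in T$ together with $a\in A(T)$ puts every multiple of $a$ into $T$; thus class $0$ admits only the empty initial segment of gaps, not $n_0+1$ choices. (The paper makes the mirror-image overcount, excluding class $0$ but not the class of $f$; both yield the same $a-1$-factor product.) This costs you a spurious factor of $q+1$, which is harmless since you only need an upper bound.
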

\begin{proof}
For the moment fix $m,f$ with $m<f$, $m \nmid f$. Say $f=mq+r$ with $0< r<m$. Consider the number of numerical sets $T$ for which $f(T)=f$ and $m(A(T))=m$, we will first prove that it is at most
$$(q+2)^{r-1}(q+1)^{m-r}.$$
Consider a numerical set with $A(T)=S$. Now if $x\in T$ then $x+m\in T$ as $m\in A(T)$. Consider the numbers $1,2,\dots,f-1$ and divide them in equivalence classes $mod\;m$. Note that $r-1$ of these equivalence classes have $q+1$ elements and $m-r$ equivalence classes have $q$ elements (we are not including the class $0(mod\;m)$). Now in each class we need to choose the first number to be included in $T$, or whether to include no number from that class. Thus the number of numerical sets is at most $(q+2)^{r-1}(q+1)^{m-r}$.

Next by the AM-GM inequality $$\left((q+2)^{r-1}(q+1)^{m-r}\right)^{\frac{1}{m-1}}
\leq\frac{(r-1)(q+2)+(m-r)(q+1)}{m-1}$$
$$=\frac{mq+r+m-q-2}{m-1}<\frac{f+m-1}{m-1}=1+\frac{f}{m-1} .$$
And thus the number of numerical sets with $m(A(T))=m$ and $f(T)=f$ is at most $\left(1+\frac{f}{m-1}\right)^{m-1}$.

Note that for a fixed $f$, $(1+\frac{f}{x})^{\frac{x}{f}}$ is an increasing function of $x$. Therefore by fixing $f$ and adding the contribution of each $m$ we get that
$$\frac{\#\{T\mid f(T)=f, m(A(T))\leq (1-\epsilon)f\}}{2^{f-1}}$$
$$\leq (1-\epsilon)f\frac{1}{2^{f-1}} \Big(1+\frac{1}{1-\epsilon}\Big)^{(1-\epsilon)f} <2f\Big(\frac{c}{2}\Big)^f$$
where $c=\Big(1+\frac{1}{1-\epsilon}\Big)^{(1-\epsilon)}<2$. Now letting $f\to\infty$ we are done.
\end{proof}

\begin{corollary}\label{less than f/2}
The size of the set $\{T\mid f(T)=f,m(A(T))\leq\frac{f}{2}\}$ is at most $f\times 3^{\frac{f}{2}}$.
\end{corollary}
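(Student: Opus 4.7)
The plan is to bootstrap directly from the per-$m$ estimate already worked out inside the proof of Theorem~\ref{thm6} and sum it over the range $m \le f/2$.

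First I would recall the intermediate bound from that proof: for each $m$ with $1 < m < f$ and $m \nmid f$, the number of numerical sets $T$ with $f(T)=f$ and $m(A(T)) = m$ is at most $\bigl(1 + \tfrac{f}{m-1}\bigr)^{m-1}$. Before summing, I would dispose of the exceptional values of $m$: if $m = 1$ then $1 \in A(T)$ forces $T = \mathbb{N}$, contradicting $f(T)=f$; and if $m \mid f$ with $m \in A(T)$, then iterating $0 \mapsto m \mapsto 2m \mapsto \cdots$ forces $f \in T$, again a contradiction. So these values of $m$ contribute zero numerical sets to the count.

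Next I would exploit the monotonicity of $x \mapsto (1+f/x)^x$ that is already used (and implicitly established) in the previous proof. For $m \le f/2$, we have $m-1 \le f/2$, so
$$\left(1+\tfrac{f}{m-1}\right)^{m-1} \;\le\; \left(1+\tfrac{f}{f/2}\right)^{f/2} \;=\; 3^{f/2}.$$
This is the key inequality: it uniformly replaces the per-$m$ estimate by the clean value $3^{f/2}$ throughout the relevant range.

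Finally I would sum over $m \in \{2, 3, \dots, \lfloor f/2 \rfloor\}$. There are at most $f/2$ terms, each bounded by $3^{f/2}$, giving a total of at most $\tfrac{f}{2}\cdot 3^{f/2} \le f \cdot 3^{f/2}$, which is the stated bound. There is no real obstacle here; the only point requiring attention is lining up the direction of the monotonicity with the range $m-1 \le f/2$ so that the evaluation at $x = f/2$ produces exactly the base $3$.
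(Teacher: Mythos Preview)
Your proposal is correct and is essentially the same argument as the paper's: the paper simply sets $\epsilon = \tfrac{1}{2}$ in the final displayed inequality of Theorem~\ref{thm6}, which (after unwinding) is exactly your per-$m$ bound $(1+f/(m-1))^{m-1}\le 3^{f/2}$ summed over $m\le f/2$. You have just spelled out the intermediate step and the edge cases $m=1$, $m\mid f$ explicitly rather than citing the packaged inequality.
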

\begin{proof}
If $\epsilon=\frac{1}{2}$ then $c=\sqrt{3}$.
\end{proof}

\section{The limits exist}
In this section we will prove part of Theorem \ref{lim n small atoms} by proving that the limit exists.
We start with a key observation that the last elements of $A(T)$ are determined by the last and the starting elements of $T$; the elements in the middle play no role.

\begin{lemma}\label{another definition}
If $f(T_1)=f_1$, $f(T_2)=f_2$, $T_1\cap[1,k]=T_2\cap[1,k]$ and for each $x\in [1,k]$ $f_1-x\in T_1$ if and only if $f_2-x\in T_2$, then for each $x\in [1,k]$ we have $f_1-x\in A(T_1)$ if and only if $f_2-x\in A(T_2)$.
\end{lemma}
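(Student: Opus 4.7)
The plan is to fix $x \in [1,k]$, write $t_i = f_i - x$, and show that the predicate $t_i \in A(T_i)$ can be rewritten in terms of only the two pieces of data preserved by the hypothesis, namely $T_i \cap [1,k]$ and $\{z \in [1,k] : f_i - z \in T_i\}$. Once such a local characterization is in hand, the statement will follow immediately.

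To carry this out I would unfold the definition: $t_i \in A(T_i)$ means $t_i + y \in T_i$ for every $y \in T_i$, and I would split into three ranges for $y$. If $y \geq x+1$, then $t_i + y \geq f_i + 1$ is automatically in $T_i$, so no condition is imposed. If $y = x$, then $t_i + y = f_i \notin T_i$, so this case simply forbids $x \in T_i$. If $0 \leq y \leq x-1$, then $t_i + y = f_i - (x-y)$ where $x - y$ lies in $[1,k]$, so the condition becomes $f_i - (x-y) \in T_i$. Collecting the cases, $t_i \in A(T_i)$ holds if and only if $x \notin T_i$ and, for every $y \in T_i \cap [0, x-1]$, one has $f_i - (x-y) \in T_i$.

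Now I would transfer across $i=1,2$. Since $x \in [1,k]$ and $T_1 \cap [1,k] = T_2 \cap [1,k]$, the condition $x \notin T_1$ matches $x \notin T_2$, and also $T_1 \cap [0,x-1] = T_2 \cap [0,x-1]$ because $0$ lies in both and $[1,x-1] \subseteq [1,k]$. For any $y$ in this common set, $x - y$ lies in $[1,k]$, so the second hypothesis yields $f_1 - (x-y) \in T_1$ iff $f_2 - (x-y) \in T_2$. The two local characterizations therefore coincide, and the lemma follows. The only subtlety in the argument is the index bookkeeping, i.e.\ making sure that every value $y$ and every difference $x - y$ actually lies in the range $[1,k]$ where one of the two hypotheses applies; this is exactly what forces the case split at $y = x$ and the separate handling of the tail $y \geq x+1$.
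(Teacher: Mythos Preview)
Your proof is correct and follows essentially the same approach as the paper's: both hinge on the observation that if $y\in T_i$ witnesses $f_i-x\notin A(T_i)$ then necessarily $y\le x\le k$, so $y$ lies in the common initial segment and $x-y$ lies in $[0,k]$, allowing the two hypotheses to transfer the witness. The only cosmetic difference is that the paper argues the contrapositive in one line (finding a witness $y$ and moving it from $T_1$ to $T_2$), whereas you package the same case split as an explicit local characterization of $f_i-x\in A(T_i)$ before comparing the two sides.
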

\begin{proof}
Given $x\in [1,k]$, if $f_1-x\not \in A(T_1)$ then there exists $y\in T_1$ such that $f_1-x+y\not\in T_1$.
Therefore $f_1-x+y\leq f_1$ i.e. $y\leq x\leq k$ and hence $y\in T_2$ and $f_2-(x-y)\not\in T_2$. It follows that $f_2-x\not\in A(T_2)$.
\end{proof}

For example consider a numerical set $T$ with Frobenius number $f$. For $f-1$ to be in $A(T)$ firstly we need $f-1\in T$. Moreover for $x\geq 2$ $f-1+x\in T$, $f-1+1\not\in T$. Therefore $f-1\in A(T)$ if and only if $f-1$ is in $T$ and $1$ is not in $T$. Similarly the condition for having $f-2$ in $A(T)$ is that $f-2\in T$, $2\not\in T$ and if $1$ is in $T$ then so is $f-1$.

\begin{definition}
Given a finite set $D$ and positive integer $f>2Max(D)$ we define $B(D,f)$ to be the set of numerical sets $T$ with $f(T)=f$ and
$$A(T)\cap [f-Max(D),f-1]=\{f-l\mid l\in D\}.$$
Further we define $A_{D}=|B(D,2Max(D)+1)|$.
\end{definition}

\begin{lemma}\label{size of B(D,f)}
If $D$ is a finite set with $Max(D)=t$ and $f\geq 2t+1$ then
$$|B(D,f)|=A_{D}2^{f-2t-1}.$$
\end{lemma}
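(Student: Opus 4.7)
My plan is to exploit Lemma \ref{another definition} to decouple $T$ into three disjoint pieces whose contributions factor cleanly. Writing $t=\text{Max}(D)$ and assuming $f\geq 2t+1$, any numerical set $T$ with $f(T)=f$ is determined by the three blocks
\[
T\cap[1,t],\qquad T\cap[t+1,f-t-1],\qquad T\cap[f-t,f-1],
\]
together with the fixed data $0\in T$, $f\notin T$, $[f+1,\infty)\subseteq T$.

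Next I would apply Lemma \ref{another definition} with $k=t$ to show that the defining condition of $B(D,f)$, namely $A(T)\cap[f-t,f-1]=\{f-l\mid l\in D\}$, depends only on the first and third blocks above, and is insensitive to the middle block $T\cap[t+1,f-t-1]$. Concretely, if $T_1$ and $T_2$ have Frobenius numbers $f_1,f_2\geq 2t+1$, agree on $[1,t]$, and satisfy $f_1-x\in T_1\iff f_2-x\in T_2$ for all $x\in[1,t]$, then the lemma gives $f_1-x\in A(T_1)\iff f_2-x\in A(T_2)$ for $x\in[1,t]$. This immediately implies that the middle block is entirely free and contributes a factor of $2^{f-2t-1}$ (an empty product when $f=2t+1$).

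I would then set up the bijection
\[
B(D,f)\;\longleftrightarrow\; B(D,2t+1)\times 2^{[t+1,f-t-1]}
\]
sending $T\in B(D,f)$ to the pair $(T',M)$, where $M=T\cap[t+1,f-t-1]$ and $T'$ is the unique numerical set with $f(T')=2t+1$, $T'\cap[1,t]=T\cap[1,t]$, and $(2t+1)-l\in T'\iff f-l\in T$ for $l\in[1,t]$. The previous paragraph, applied to $(T,T')$ with $k=t$, shows $A(T')\cap[t+1,2t]=\{(2t+1)-l\mid l\in D\}$, so $T'\in B(D,2t+1)$. The inverse map is equally explicit: given $(T',M)$, reconstruct $T$ by keeping the same first block, inserting $M$ in the middle, and placing the large elements $f-l$ in $T$ exactly when $(2t+1)-l\in T'$.

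Taking cardinalities, $|B(D,f)|=|B(D,2t+1)|\cdot 2^{f-2t-1}=A_D\cdot 2^{f-2t-1}$, which is the claim. There is no real obstacle here since Lemma \ref{another definition} already encodes the essential content; the only point requiring care is confirming that the bijection is well-defined in both directions, in particular that the constructed $T$ indeed has Frobenius number exactly $f$ (guaranteed because $f\notin T$ by construction and $[f+1,\infty)\subseteq T$) and that one verifies $T'\in B(D,2t+1)$ using the lemma rather than recomputing $A(T')$ directly.
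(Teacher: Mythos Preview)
Your proof is correct and follows essentially the same approach as the paper. The paper defines the map $g(T)=(T\cap[1,t])\cup((T\cap[f-t,f-1])-(f-2t-1))\cup N_{2t+1}$, which is exactly your $T'$, and then counts the fibers of $g$ as having size $2^{f-2t-1}$; your version simply repackages this as an explicit bijection $B(D,f)\leftrightarrow B(D,2t+1)\times 2^{[t+1,f-t-1]}$ by carrying along the middle block $M$ as a second coordinate, with Lemma~\ref{another definition} doing the real work in both presentations.
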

\begin{proof}
Consider a map $g$ from numerical sets with Frobenius number $f$ to those with Frobenius number $2t+1$, given by
$$g(T)
=\left(T\cap [1,t]\right)
\cup \left((T\cap [f-t,f-1])-(f-2t-1)\right)
\cup N_{2t+1}.$$
The map $g$ is surjective and every numerical set with Frobenius number $2t+1$ has exactly $2^{f-2t-1}$ numerical sets in its preimage.
Since $T\cap [1,t]=g(T)\cap [1,t]$ and for each $x \in [1,t]$ we have $f-x \in T$ if and only if $2t+1-x\in g(T)$, we can conclude by Lemma \ref{another definition} that for $y \in [1,t]$, $f-y \in A(T)$ if and only if $2t+1-y \in A(g(T))$. Thus $T \in B(D,f)$ if and only if $g(T) \in B(D,2t+1)$.
Therefore $|B(D,f)|=A_{D}2^{f-2t-1}$.
\end{proof}

We define $S(D,f)$ to be the subset of $B(D,f)$ that consists of all $T$ for which $m(A(T))\leq\frac{f}{2}$.
As proved in Corollary \ref{less than f/2}, $|S(D,f)|<f\times 3^{f/2}$, so it is relatively small compared to $2^f$.

\begin{theorem}\label{Main Corr}
Given a finite set $D$, $Max(D)=t$ and $f\geq 2t+1$
$$\mu(N(D,f))
=A_{D}4^{-t}-\sum_{k=t+1}^{\lfloor\frac{f-1}{2}\rfloor}A_{D\cup\{k\}}4^{-k}+O\Big(f\Big(\frac{\sqrt{3}}{2}\Big)^f\Big).$$
\end{theorem}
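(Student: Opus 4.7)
The plan is to relate $P(N(D,f))$ to $|B(D,f)|$ and the $|B(D\cup\{k\},f)|$'s via a natural partition of $B(D,f)$, then read off the claimed expansion using Lemma \ref{size of B(D,f)} and bound the tail via Corollary \ref{less than f/2}.

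First I would observe that $A(T)=N(D,f)$ is equivalent to $T\in B(D,f)$ together with $A(T)\cap[1,f-t-1]=\emptyset$, so $P(N(D,f))$ is exactly the number of $T\in B(D,f)$ whose associated semigroup avoids $[1,f-t-1]$. To count the rest, for each such $T$ I let $f-k$ denote the largest element of $A(T)\cap[1,f-t-1]$ (equivalently, $k$ is the smallest $l>t$ with $f-l\in A(T)$), and let $F(k)\subseteq B(D,f)$ be the block of $T$'s with this value of $k$. A direct unwinding of definitions shows $F(k)=B(D\cup\{k\},f)$: both conditions amount to $f(T)=f$ together with $A(T)\cap[f-k,f-1]=\{f-l:l\in D\cup\{k\}\}$. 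This yields
\[|B(D,f)|=P(N(D,f))+\sum_{k=t+1}^{f-1}|B(D\cup\{k\},f)|.\]

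Next I would truncate the sum at $k=\lfloor(f-1)/2\rfloor$. For $k$ in this range Lemma \ref{size of B(D,f)} applies and gives $|B(D\cup\{k\},f)|/2^{f-1}=A_{D\cup\{k\}}4^{-k}$, while $|B(D,f)|/2^{f-1}=A_D4^{-t}$. For every $k>\lfloor(f-1)/2\rfloor$, each $T\in F(k)$ has $m(A(T))\leq f-k\leq f/2$, so the disjoint union of these tail blocks sits inside $S(D,f)$; Corollary \ref{less than f/2} then bounds the total tail by $f\cdot3^{f/2}$, which after dividing by $2^{f-1}$ contributes $O\bigl(f(\sqrt{3}/2)^f\bigr)$.

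The argument is largely bookkeeping; the one step requiring real care is verifying the identity $F(k)=B(D\cup\{k\},f)$, which is where the recursive structure of the problem enters and where the hypothesis $Max(D)=t<k$ is used to guarantee that $D\cup\{k\}$ still has maximum $k$. The rest is a matter of applying the two earlier lemmas and checking that the discarded tail terms are collectively (not merely individually) absorbed into $S(D,f)$, which is immediate since the blocks $F(k)$ are pairwise disjoint by construction.
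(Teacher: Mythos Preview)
Your proposal is correct and follows essentially the same approach as the paper: partition $B(D,f)$ according to the smallest $k>t$ with $f-k\in A(T)$, identify each block with $B(D\cup\{k\},f)$, apply Lemma~\ref{size of B(D,f)} for $k\le\lfloor(f-1)/2\rfloor$, and absorb the tail into $S(D,f)$ via Corollary~\ref{less than f/2}. If anything, your handling of the tail is slightly cleaner, since you only assert the inclusion $\bigcup_{k>\lfloor(f-1)/2\rfloor}F(k)\subseteq S(D,f)$, which is all that is needed for the $O$-bound.
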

\begin{proof}
Call a numerical set $T$ good if $A(T)=N(D,f)$, we know that all good sets are contained in $B(D,f)$. 
Next, the complement of good numerical sets with respect to $B(D,f)$ consists of those $T$ for which $A(T)\cap [1,f-t-1]\neq\emptyset$.
If $A(T)\cap [1,f-t-1]\neq\emptyset$, pick the smallest $k$ for which $f-k\in A(T)\cap [1,f-t-1]$ and note that $T\in B(D\cup\{k\},f)$.
Therefore the complement can be written as a disjoint union
$\cup_{k=t+1}^{f-1}B(D\cup\{k\},f)$.
Moreover $$\bigcup_{k=\lceil\frac{f}{2}\rceil}^{f-1}B(D\cup\{k\},f)=S(D,f).$$
Therefore by Lemma \ref{size of B(D,f)}
$$P(N(D,f))
=|B(D,f)|-\sum_{k=t+1}^{\lfloor\frac{f-1}{2}\rfloor}|B(D\cup\{k\},f)|-|S(D,f)|$$
$$=A_{D}2^{f-2t-1}-\sum_{k=t+1}^{\lfloor\frac{f-1}{2}\rfloor}A_{D\cup\{k\}}2^{f-2k-1}-O\left(f\sqrt{3}^f\right).$$
\end{proof}

\begin{theorem}\label{Main Result}
Given a finite set $D$, $Max(D)=t$
\[\lim_{f \rightarrow \infty} \mu(N(D,f))=A_{D}4^{-t}-\sum_{k=t+1}^{\infty}A_{D\cup\{k\}}4^{-k}.\]
\end{theorem}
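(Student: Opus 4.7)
Theorem \ref{Main Corr} already provides the identity
\[
\mu(N(D,f)) = A_{D}4^{-t} - \sum_{k=t+1}^{\lfloor(f-1)/2\rfloor} A_{D\cup\{k\}}4^{-k} + O\Big(f\Big(\frac{\sqrt{3}}{2}\Big)^{f}\Big),
\]
valid for every $f \geq 2t+1$. To deduce Theorem \ref{Main Result}, I would pass to the limit as $f \to \infty$ on both sides. The error term vanishes since $\sqrt{3}/2 < 1$. The remaining task is to verify that the partial sums on the right converge to the full series as $f \to \infty$, which reduces to absolute convergence of $\sum_{k=t+1}^{\infty} A_{D\cup\{k\}}4^{-k}$.

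The main work is therefore a bound on $A_{D\cup\{k\}}$ that beats $4^{k}$. I would obtain this by observing that any $T \in B(D \cup \{k\}, 2k+1)$ must satisfy $k+1 = (2k+1)-k \in A(T)$. This single condition already forces $k \notin T$ (otherwise $k+(k+1)=2k+1$ would lie in $T$, contradicting $f(T)=2k+1$) and forces $T \cap [k+2, 2k]$ to contain the translate $(T \cap [1,k-1])+(k+1)$. Splitting on the choice of $I := T \cap [1,k-1]$ and on the $2^{k-1-|I|}$ remaining completions in $[k+2,2k]$ gives
\[
A_{D\cup\{k\}} \leq \sum_{I \subseteq [1,k-1]} 2^{k-1-|I|} = 3^{k-1},
\]
so $A_{D\cup\{k\}}4^{-k} \leq \frac{1}{4}(3/4)^{k-1}$ and the series converges.

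Combining the vanishing error term with the absolute convergence of the series, sending $f\to\infty$ in the identity above yields exactly the claimed formula. The only substantive step is the sub-$4^{k}$ bound on $A_{D\cup\{k\}}$; once that is in place (and any bound $A_{D\cup\{k\}} \leq C\cdot r^{k}$ with $r<4$ would suffice), the passage to the limit is routine.
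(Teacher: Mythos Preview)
Your proof is correct. The paper's own argument is even shorter: it simply observes that the partial sums
\[
A_{D}4^{-t}-\sum_{k=t+1}^{N}A_{D\cup\{k\}}4^{-k}
\]
are monotonically decreasing in $N$ (each $A_{D\cup\{k\}}\geq 0$) and bounded below by $0$ (since, up to the vanishing error term from Theorem~\ref{Main Corr}, these partial sums equal the nonnegative density $\mu(N(D,f))$), hence converge. Your route instead establishes absolute convergence by proving the explicit bound $A_{D\cup\{k\}}\leq 3^{k-1}$; this is exactly the content of Lemma~\ref{estimate A}, which the paper states and proves \emph{after} Theorem~\ref{Main Result}. So your argument is self-contained and gives a quantitative tail estimate, while the paper's is a one-line monotone convergence observation that avoids any counting but leans on positivity of $\mu$. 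Either way the passage to the limit is immediate.
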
 
\begin{proof}
Note that the right hand side of the equation is monotonically decreasing and bounded below by $0$ and hence convergent.
\end{proof}

\section{The limit is positive}
In this section we will show that the limits $\gamma_{D}$ are actually positive.

\begin{definition}
Given $k<f$, we define
$$G_l(f)=\{T\mid A(T)=N_f, T\cap [1,l]=\emptyset\} .$$
$$B_l(k,f)=\{T\mid T\in B(\{k\},f), T\cap [1,l]=\emptyset\} .$$
If $k\geq l$ we define $C_{l,k}$ to be the $|B_l(k,2k+1)|$.
And if $k\leq l$ then we define $C_{l,k}$ to be $|B_l(k,l+k+1)|$.
\end{definition}

We first prove that for a fixed $l$ the limit
$$\lim_{f\to\infty}\frac{|G_l(f)|}{2^{f-1}}$$
exists and is positive. The proof will follow a method similar to the last section.
This result will then be used to prove that $\gamma_D$ are all positive.

\begin{lemma}\label{B_l(k,f), k>l}
If $f\geq 2k+1$ and $k\geq l$ then $|B_l(k,f)|=C_{l,k}2^{f-2k-1}$.
\end{lemma}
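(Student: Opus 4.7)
The plan is to mimic the proof of Lemma \ref{size of B(D,f)} using the same restriction/shift map, and simply observe that the extra condition $T\cap[1,l]=\emptyset$ is respected because $l\le k$.

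More precisely, I would apply the map $g$ from the proof of Lemma \ref{size of B(D,f)} with $t=k$, namely
\[
g(T)=\bigl(T\cap[1,k]\bigr)\cup\bigl((T\cap[f-k,f-1])-(f-2k-1)\bigr)\cup N_{2k+1}.
\]
This map sends numerical sets of Frobenius number $f$ onto numerical sets of Frobenius number $2k+1$, and the fiber over each target has exactly $2^{f-2k-1}$ elements, indexed by the free choices in the ``middle'' interval $[k+1,f-k-1]$. Lemma \ref{another definition} guarantees that for $y\in[1,k]$, $f-y\in A(T)$ iff $2k+1-y\in A(g(T))$, so $T\in B(\{k\},f)$ iff $g(T)\in B(\{k\},2k+1)$.

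Next I would handle the new constraint $T\cap[1,l]=\emptyset$. Since $l\le k$, this intersection is determined entirely by $T\cap[1,k]$, which equals $g(T)\cap[1,k]$ by construction. Hence $T\cap[1,l]=\emptyset$ iff $g(T)\cap[1,l]=\emptyset$, and consequently $T\in B_l(k,f)$ iff $g(T)\in B_l(k,2k+1)$. Combining this with the fiber count yields
\[
|B_l(k,f)|=|B_l(k,2k+1)|\cdot 2^{f-2k-1}=C_{l,k}\,2^{f-2k-1},
\]
as desired.

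There is no real obstacle here; the lemma is a direct variant of Lemma \ref{size of B(D,f)}, and the only point worth flagging is the hypothesis $k\ge l$, which is exactly what makes the condition $T\cap[1,l]=\emptyset$ depend only on the ``left half'' of $T$ that is preserved by $g$. The companion case $k\le l$ will need a different truncation (to Frobenius number $l+k+1$), but that is the content of a separate lemma.
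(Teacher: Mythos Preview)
Your proposal is correct and matches the paper's proof essentially line for line: the same map $g$, the same appeal to Lemma~\ref{another definition}, and the same use of $l\le k$ to transfer the condition $T\cap[1,l]=\emptyset$. There is nothing to add.
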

\begin{proof}
Consider the map $g$ from numerical sets with Frobenius number $f$ to those with Frobenius number $2k+1$, given by
$$g(T)
=\left(T\cap [1,k]\right)
\cup \left((T\cap [f-k,f-1])-(f-2k-1)\right)
\cup N_{2k+1}.$$
The map $g$ is surjective and every numerical set with Frobenius number $2k+1$ has exactly $2^{f-2k-1}$ numerical sets in its preimage.
Lemma \ref{another definition} tells us that $T \in B(\{k\},f)$ if and only if $g(T) \in B(\{k\},2k+1)$.
Moreover since $l\leq k$ it is clear that $T\cap[1,l]=\emptyset$ if and only if $g(T)\cap[1,l]=\emptyset$.
Therefore $|B_l(k,f)|=C_{l,k}2^{f-2k-1}$.
\end{proof}

\begin{lemma}
If $f\geq l+k+1$ and $k<l$ then
$|B_l(k,f)|=C_{l,k}2^{f-k-l-1}$.
\end{lemma}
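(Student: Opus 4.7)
The plan is to adapt the proof of Lemma \ref{B_l(k,f), k>l}, but since now $k<l$ the target Frobenius number $2k+1$ used there is smaller than $l$ and cannot carry the constraint $T\cap[1,l]=\emptyset$. The natural fix is to use $l+k+1$ as the target Frobenius number instead, so that both the small window $[1,l]$ and the top window $[f-k,f-1]$ can be stored disjointly inside $[1,l+k]$.

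Concretely, I would define a map $g$ from numerical sets $T$ with $f(T)=f$ to numerical sets with Frobenius number $l+k+1$ by
$$g(T)=\bigl(T\cap[1,l]\bigr)\cup\bigl((T\cap[f-k,f-1])-(f-l-k-1)\bigr)\cup N_{l+k+1}.$$
Under the shift by $-(f-l-k-1)$ the top window $[f-k,f-1]$ lands in $[l+1,l+k]$, which is disjoint from $[1,l]$ and avoids $l+k+1$, so $g(T)$ is a well-defined numerical set whose Frobenius number is exactly $l+k+1$. The map is surjective, and any preimage is specified by freely choosing the coordinates in the middle interval $[l+1,f-k-1]$, so every fiber has size $2^{f-l-k-1}$.

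To match the $B(\{k\},\cdot)$ condition on the two sides I would apply Lemma \ref{another definition} with the window of length $k$: since $k\leq l$ we have $T\cap[1,k]=g(T)\cap[1,k]$, and by construction $f-x\in T$ if and only if $(l+k+1)-x\in g(T)$ for every $x\in[1,k]$. Lemma \ref{another definition} then yields $f-x\in A(T)$ if and only if $(l+k+1)-x\in A(g(T))$ for $x\in[1,k]$, so $T\in B(\{k\},f)$ if and only if $g(T)\in B(\{k\},l+k+1)$. Moreover $T\cap[1,l]=g(T)\cap[1,l]$ is immediate, hence the condition $T\cap[1,l]=\emptyset$ transports across $g$ as well. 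Therefore $T\in B_l(k,f)$ if and only if $g(T)\in B_l(k,l+k+1)$, and fiber-counting gives $|B_l(k,f)|=C_{l,k}\,2^{f-k-l-1}$. The only real subtlety is the choice of target Frobenius number; once one uses $l+k+1$ rather than $2k+1$, the argument is a straightforward mirror of the previous lemma and no new obstacle arises.
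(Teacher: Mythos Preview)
Your proof is correct and follows essentially the same approach as the paper: you define the identical map $g$ to numerical sets with Frobenius number $l+k+1$, invoke Lemma \ref{another definition} on the window of length $k$, and count fibers in the same way. If anything, you give slightly more detail (explicitly checking well-definedness of $g$ and identifying the middle interval $[l+1,f-k-1]$ of free coordinates).
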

\begin{proof}
Consider the map $g$ from numerical sets with Frobenius number $f$ to those with Frobenius number $l+k+1$, given by
$$g(T)
=\left(T\cap [1,l]\right)
\cup \left((T\cap [f-k,f-1])-(f-l-k-1)\right)
\cup N_{l+k+1}.$$
The map $g$ is surjective.
and every numerical set with Frobenius number $l+k+1$ has exactly $2^{f-l-k-1}$ numerical sets in its preimage.
Lemma \ref{another definition} tells us that
$T \in B(\{k\},f)$ if and only if $g(T) \in B(\{k\},l+k+1)$.
Also $T\cap[1,l]=\emptyset$ if and only if $g(T)\cap[1,l]=\emptyset$.
Therefore $|B_l(k,f)|=C_{l,k}2^{f-l-k-1}$.
\end{proof}

Note that $f>2l$, $k<\frac{f}{2}$ will ensure that $f-k>l$ i.e. $f\geq k+l+1$.

\begin{theorem}
Fix $l$, then  $\lim_{f\to\infty}\frac{|G_l(f)|}{2^{f-1}}$ exists and is positive.
\end{theorem}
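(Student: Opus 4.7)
The plan is to mirror the argument of Theorem~\ref{Main Corr}. I would first partition $\widetilde{B}_l(f):=\{T\mid f(T)=f,\,T\cap[1,l]=\emptyset\}$, which has cardinality $2^{f-1-l}$, into $G_l(f)$ and its complement. A set $T$ in the complement has $A(T)\cap[1,f-1]\neq\emptyset$, so taking $k$ to be the smallest positive integer with $f-k\in A(T)$ places $T$ in $B_l(k,f)$, giving the disjoint decomposition $\bigsqcup_{k=1}^{f-1}B_l(k,f)$.

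Next, I would evaluate each $|B_l(k,f)|$ using the two preceding lemmas: for $1\leq k\leq l-1$, $|B_l(k,f)|=C_{l,k}\,2^{f-k-l-1}$; for $l\leq k\leq\lfloor(f-1)/2\rfloor$, $|B_l(k,f)|=C_{l,k}\,2^{f-2k-1}$. For $k>(f-1)/2$, each $T\in B_l(k,f)$ satisfies $m(A(T))\leq f-k\leq f/2$, so Corollary~\ref{less than f/2} bounds the combined contribution by $O(f\cdot 3^{f/2})$. Dividing by $2^{f-1}$ gives
\[
\frac{|G_l(f)|}{2^{f-1}} \;=\; 2^{-l} \;-\; \sum_{k=1}^{l-1}\frac{C_{l,k}}{2^{k+l}} \;-\; \sum_{k=l}^{\lfloor(f-1)/2\rfloor}\frac{C_{l,k}}{4^{k}} \;+\; O\!\left(f\bigl(\tfrac{\sqrt{3}}{2}\bigr)^{\!f}\right).
\]
All $C_{l,k}\geq 0$, so the partial sum is non-decreasing in $f$, and it is bounded above by $2^{-l}$ because $|G_l(f)|\geq 0$; hence it converges and the limit exists.

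The main obstacle is to show this limit is strictly positive. My plan is to exhibit a family of numerical sets inside $G_l(f)$ whose cardinality is at least $c\cdot 2^{f-1}$ for some constant $c>0$ independent of $f$. A natural starting point is the prototype $T_0=\{0\}\cup[l+1,f-l-1]\cup[f+1,\infty)$, which already satisfies $A(T_0)=N_f$: for every $a\in[l+1,f-l-1]$ the window $[f-l-a,f-a]\cap T_0$ is non-empty, and translation by $a$ sends this witness into the gap $[f-l,f]\setminus T_0$. I would then allow the bits of $T$ in $[l+2,f-l-2]$ to be a random perturbation of $T_0$ and estimate the probability that $A(T)=N_f$ still holds, via a union bound combined with an FKG-type correlation inequality.

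The delicate point, and the real obstacle, is at indices $a$ close to $f-l-1$, where the candidate witness set has length only $l+1$ and the single-event probability $P(a\in A(T))$ is of constant order. For $a=f-l-1-j$ one checks that $a\in A(T)$ forces $T$ to miss the short specific interval $[l+1,l+j+1]$, giving $P(a\in A(T))\leq 2^{-(j+2)}$, which is summable in $j$. This geometric decay, combined with the standard contribution from interior $a$'s (handled exactly as in Marzuola--Miller's $l=0$ argument), should yield a uniform positive lower bound on $P(A(T)=N_f\mid T\cap[1,l]=\emptyset)$ as $f\to\infty$, and hence on $|G_l(f)|/2^{f-1}$.
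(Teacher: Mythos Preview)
Your existence argument is essentially the paper's: both decompose the set of $T$ with $T\cap[1,l]=\emptyset$ into $G_l(f)$ and the disjoint pieces $B_l(k,f)$, apply the two size lemmas for $|B_l(k,f)|$, and observe that the resulting series is monotone and bounded.

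For positivity the paper takes a completely different route, and your sketch has a real gap. The paper never leaves the constants $C_{l,k}$: it shows $C_{l,k}=1$ for all $1\le k\le 2l+1$ (by checking that the only admissible $T$ is $\{f-k\}\cup N_f$) and proves $C_{l,k}\le 2^{l}\,3^{\,k-2l-1}$ for $k\ge 2l+2$. Plugging these into the series gives the explicit lower bound
\[
\lim_{f\to\infty}\frac{|G_l(f)|}{2^{f-1}}\ \ge\ \frac{2}{3}\cdot\frac{1}{4^{l}}\;-\;\frac{3\cdot 2^{l}}{4^{2l+1}}\ >\ 0,
\]
with no probabilistic input whatsoever.

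In your approach the key inequality $P\bigl(a\in A(T)\bigr)\le 2^{-(j+2)}$ for $a=f-l-1-j$ holds only while $j\le l$. Once $j>l$, the witnesses $x$ with $a+x\in[f-l,f]$ lie in $[j+1,\,l+j+1]$, an interval of fixed length $l+1$ contained in the random region, so the event $a\in A(T)$ has probability of order $2^{-(l+2)}$, a constant independent of $j$. The claimed geometric decay therefore stops after $l$ steps, the tail is not summable, and a union bound over the $\Theta(f)$ interior values of $a$ yields nothing. Deferring the interior to ``the Marzuola--Miller $l=0$ argument'' does not close the gap either: you would have to show that their method survives both the conditioning $T\cap[1,l]=\emptyset$ and the forced empty block $[f-l,f-1]$ in your prototype, and no indication is given of how to do this. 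The paper's direct bookkeeping on the $C_{l,k}$ avoids all of these issues.
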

\begin{proof}
Assume $f>2l$,
we start with the collection of all numerical sets $T$ with Frobenius number $f$ for which $T\cap [1,l]=\emptyset$.
There are $2^{f-1-l}$ of them.
The complement of $G_l(f)$ consists of those $T$ for which $A(T)\cap [1,f-1]\neq\emptyset$.
Given such a $T$, it belongs to a unique $B_l(k,f)$.
The $k$ is determined such that $f-k$ is the largest element of $A(T)\cap [1,f-1]$.

Now $2(f-k)\in A(T)$ as $A(T)$ is additively closed, and $2(f-k)>f-k$ therefore $2(f-k)>f$ i.e. $k<\frac{f}{2}$.
Therefore the complement is the disjoint union of $B_l(k,f)$ as $k$ ranges from $1$ to $\lfloor\frac{f-1}{2}\rfloor$.

$$|G_l(f)|=2^{f-1-l}-\sum_{k=1}^{\lfloor\frac{f-1}{2}\rfloor}|B_l(k,f)|.$$

Therefore the limit is
$$\lim_{f\to\infty}\frac{G_l(f)}{2^{f-1}}
=\frac{1}{2^l}-\sum_{k=1}^{l}C_{l,k}2^{-l-k}-\sum_{k=l+1}^{\infty}C_{l,k}4^{-k} . $$

Next we estimate the constants $C_{l,k}$.
First consider the case $k\leq l$, we will prove $C_{l,k}=1$. Let $f=l+k+1$ and $T\in B_l(k,f)$. We know $f-k=l+1\in T$, $T\cap[1,l]=\emptyset$.
If some $x\in[f-k+1,f-1]$ is in $T$, then for any non-zero $y$ in $T$ $x+y\geq f-k+1+l+1>f$ which would imply that $x\in A(T)$.
Therefore $x$ cannot be in $T$, it follows that $T=\{f-k\}\cup N_{f}$ and $C_{l,k}=1$.

Second if $l<k\leq 2l+1$, we will again prove $C_{l,k}=1$. Let $f=2k+1$, $T\in B_l(k,f)$.
By the argument in the first case it follows that $T\cap [f-l,f-1]=\emptyset$.
Since $f-k\in A(T)$ it follows that $T\cap [k-l,k-1]=\emptyset$.
Now since $k-l\leq l+1$ we have $T\cap [1,k-1]=\emptyset$ and hence $T\cap [f-k+1,f-1]=\emptyset$.
Therefore $T=\{f-k\}\cup N_{f}$.

Lastly for $k\geq 2l+2$ we will show that $C_{l,k}\leq 2^l\times 3^{k-2l-1}$.
Let $f=2k+1$ and consider a $T$ in $B_l(k,f)$.
$T\cap [1,l]=\emptyset$, this implies that $T\cap[f-l,f-1]=\emptyset$. Since $f-k\in A(T)$ this in turn implies that $T\cap [k-l,k-1]=\emptyset$. We see that $T$ is a subset of
$$[l+1,k-l-1]\cup \{k,k+1\}\cup [k+2,k+l+1]\cup [k+l+2,2k-l]\cup N_f.$$
We know that $k+1=f-k\in T$, $k\not\in T$ since $f-k\in A(T)$.
Moreover $k+1\in A(T)$ also implies that whenever some $x\in [l+1,k-l-1]$ in in $T$ then $k+1+x\in [k+l+2,2k-l]$ is also in $T$.
Counting numerical sets with these conditions we have $3^{k-2l-1}$ choices for $[l+1,k-l-1]\cup [k+l+2,2k-l]$ and $2^l$ choices for $[k+2,k+l+1]$.
In conclusion $C_{l,k}\leq 2^l3^{k-2l-1}$.

We now see that
$$\sum_{k=1}^{l}C_{l,k}2^{-l-k}=2^{-l}\sum_{k=1}^{l}\frac{1}{2^k}=\frac{1}{2^l}-\frac{1}{4^l},$$
$$\sum_{k=l+1}^{2l+1}C_{l,k}4^{-k}=\sum_{k=l+1}^{2l+1}\frac{1}{4^{k}}=\frac{1}{3}\frac{1}{4^l}-\frac{1}{3}\frac{1}{4^{2l+1}} . $$

Therefore $$\frac{1}{2^l}-\sum_{k=1}^{l}C_{l,k}2^{-l-k}-\sum_{k=l+1}^{2l+1}C_{l,k}4^{-k}=\frac{2}{3}\frac{1}{4^l}+\frac{1}{3}\frac{1}{4^{2l+1}}>\frac{2}{3}\frac{1}{4^l} . $$

Also $$\sum_{k=2l+2}^{\infty}C_{l,k}4^{-k}\leq \sum_{k=2l+2}^{\infty}2^l\times 3^{k-2l-1}\times 4^{-k}=\frac{2^l\times 3}{4^{2l+1}} . $$

Finally note that $$\frac{2}{3}\frac{1}{4^l}>\frac{3\times 2^l}{4^{2l+1}}$$ and hence the limit is positive.
\end{proof}

Denote the constant $a_l=\frac{2}{3}\frac{1}{4^l}-\frac{3\times 2^l}{4^{2l+1}}$.

\begin{theorem}\label{lims are positive}
Given a finite $D$ with $Max(D)=t$ we have $\gamma_{D}\geq \frac{a_{t}}{2^{t+1}}$, and in particular it is positive.
\end{theorem}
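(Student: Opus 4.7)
The plan is to construct, for each sufficiently large $f$, an injection from $G_t(f)$ into the fiber $A^{-1}(N(D,f'))$ where $f'=f+t+1$ and $t=Max(D)$. This would give $P(N(D,f'))\geq |G_t(f)|$, hence
\[\mu(N(D,f'))\geq \frac{|G_t(f)|}{2^{f'-1}}=\frac{1}{2^{t+1}}\cdot \frac{|G_t(f)|}{2^{f-1}}.\]
Letting $f\to\infty$ and using the preceding theorem's bound $\lim_{f\to\infty}|G_t(f)|/2^{f-1}\geq a_t$ then yields $\gamma_D\geq a_t/2^{t+1}$, which is positive because $a_t>0$.

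The first step is a structural observation: any $T\in G_t(f)$ also satisfies $T\cap[f-t,f-1]=\emptyset$. Indeed, $T\cap[1,t]=\emptyset$ forces every nonzero element of $T$ to be at least $t+1$, so for $x\in[1,t]$ and any nonzero $y\in T$ we have $(f-x)+y\geq f+1$, which lies in the tail of $T$. Therefore $f-x\in A(T)$ if and only if $f-x\in T$; since $A(T)=N_f$ contains no element of $[1,f-1]$, each such $f-x$ must be absent from $T$. Consequently $T=\{0\}\cup (T\cap[t+1,f-t-1])\cup [f+1,\infty)$.

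The injection would send $T\mapsto T'$ defined by
\[T'=\{0\}\cup (T\cap[t+1,f-t-1])\cup \{f'-l\mid l\in D\}\cup [f'+1,\infty).\]
Since $0\notin D$, $f'\notin T'$ and $f(T')=f'$; injectivity is clear because $T'\cap[t+1,f-t-1]=T\cap[t+1,f-t-1]$, which together with the structural observation determines $T$. The verification that $A(T')=N(D,f')$ has two parts. For $l\in D$ and any nonzero $y\in T'$ one has $y\geq t+1$, so $(f'-l)+y\geq f'+1$ lies in $T'$; hence $f'-l\in A(T')$. Conversely, for $x\in T'\cap[1,f'-1]$ not of the form $f'-l$, necessarily $x\in T\cap[t+1,f-t-1]$, and since $A(T)=N_f$ there is $z\in T$ with $x+z\notin T$; this $z$ is nonzero, hence $z\in T\cap[t+1,f-t-1]$ (if $z\geq f+1$ then $x+z\geq f+2$ lies in the tail, a contradiction), so $z\in T'$. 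Moreover $x+z\leq f$, and on $[1,f]$ we have $T'\cap[1,f]=T\cap[1,f]$, so $x+z\notin T'$ and $x\notin A(T')$.

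The main potential obstacle is that inserting the small-atom block $\{f'-l\mid l\in D\}$ could in principle create spurious elements in $A(T')$. The structural observation of step one is exactly what rules this out: the block sits in the fresh territory $[f+1,f+t]$ above the Frobenius number of the original $T$, while every witness sum $x+z$ in the converse direction is bounded by $f$, so the inserted block is invisible to the verification. The choice of shift amount $f'-f=t+1$ is what produces the clean identity $T'\cap[1,f]=T\cap[1,f]$ and ultimately the factor $2^{-(t+1)}$ in the bound.
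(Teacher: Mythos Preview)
Your argument is correct and is essentially the paper's own proof, up to the change of variable $f'=f+t+1$: the paper builds the same injection $G_t(f-t-1)\hookrightarrow A^{-1}(N(D,f))$ and deduces the same inequality. Your explicit structural observation that $T\cap[f-t,f-1]=\emptyset$ for $T\in G_t(f)$ is a nice clarification, though the paper's formulation (taking all of $T_1\cap[0,f-t-1]$ directly) sidesteps needing it.
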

\begin{proof}
We create an injective map from $G_{t}(f-t-1)$ to the numerical sets associated with $N(D,f)$.
Given $T_1\in G_{t}(f-t-1)$ let 
$$T=(T_1\cap [0,f-t-1])\cup \{f-l\mid l\in D\}\cup\{f+1\rightarrow\} . $$
Note that $f(T)=f$ and $T\cap [1,t]=\emptyset$, therefore 
$$A(T)\cap [f-t,f]=\{f-l\mid l\in D\} .$$ 
Given $1\leq x<f-t$ if $x\in T$ then $x\in T_1$ but $x\not\in A(T_1)$ so there exists $y\in T_1$ such that $x+y\not\in T_1$. In this case we must have $x+y\leq f-t-1$ and hence $y\in T$, $x+y\not\in T$.
Therefore $A(T)=N(D,f)$.

Finally the size of $G_{t}(f-t-1)$ is at least $a_{t}2^{f-t-2}(1+o(1))$ and the result follows.
\end{proof}

We next investigate how fast the sequence for $\gamma_D$ converges.
\begin{lemma}\label{estimate A}
For a finite set $D$ with $Max(D)=t$ we have $A_{D}\leq 3^{t-1}$.
\end{lemma}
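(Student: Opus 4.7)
The plan is to exploit the fact that since $Max(D)=t$, the element $t\in D$, which forces the element $t+1=(2t+1)-t$ to lie in $A(T)$ for every $T\in B(D,2t+1)$. This single constraint $t+1\in A(T)$ turns out to be enough to bound $A_D$ by $3^{t-1}$, even before using any other membership conditions coming from $D$.

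First I would note that $t+1\in A(T)$ implies $t+1\in T$, and moreover that $t\notin T$: otherwise $t\in T$ together with $t+1\in A(T)$ would give $t+(t+1)=2t+1\in T$, contradicting $f(T)=2t+1$. So two of the $2t$ positions in $[1,2t]$ are completely determined.

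Next, the remaining $2t-2$ positions in $[1,2t]$ split into pairs $\{x,\,x+t+1\}$ for $x\in[1,t-1]$, and these pairs together partition $[1,t-1]\cup[t+2,2t]$. The condition $t+1\in A(T)$ means that whenever $x\in T$ with $x\le t-1$ one must have $x+t+1\in T$. Hence for each such pair only three of the four possible in/out patterns are allowed, namely $(\text{out},\text{out})$, $(\text{out},\text{in})$, and $(\text{in},\text{in})$, the pattern $(\text{in},\text{out})$ being forbidden. This bounds the number of admissible restrictions $T\cap[1,2t]$ by $3^{t-1}$, and since every $T$ with $f(T)=2t+1$ is determined by $T\cap[1,2t]$, we obtain $|B(D,2t+1)|\le 3^{t-1}$.

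There is no real obstacle here: the only thing to be careful about is checking that the pairs $\{x,x+t+1\}$ for $x\in[1,t-1]$ together with the singletons $\{t\}$ and $\{t+1\}$ really do exhaust $[1,2t]$ disjointly, which is a direct index check. The additional constraints encoded by the other elements of $D$ (and the requirement that no other $f-k$ with $k\in[1,t-1]\setminus D$ lies in $A(T)$) can only further restrict $T$, so dropping them still yields a valid upper bound $A_D\le 3^{t-1}$.
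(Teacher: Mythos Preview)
Your proof is correct and follows essentially the same argument as the paper: use $t+1\in A(T)$ to force $t+1\in T$, $t\notin T$, and the implication $x\in T\Rightarrow x+t+1\in T$ for $x\in[1,t-1]$, yielding at most $3^{t-1}$ choices. Your write-up simply spells out the pairing and the excluded $(\text{in},\text{out})$ pattern a bit more explicitly than the paper does.
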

\begin{proof}
Let $f=2t+1$, consider a $T$ in $B(D,f)$. We know that $t+1=f-t\in A(T)$ so $t+1\in T$ and $t\not\in T$.
Also whenever some $x\in [1,t-1]$ is in $T$ then $t+1+x\in [t+2,2t]$ is also in $T$.
Therefore $A_{D}\leq 3^{t-1}$.
\end{proof}

It follows that $\sum_{k=N+1}^{\infty}A_{D\cup\{k\}}4^{-k}
\leq (\frac{3}{4})^N$. This was used to compute Table \ref{t1}.

\section{General sequences of numerical semigroups}

Recall that for a numerical semigroup $S$, $R(S)=f(S)-m(S)$.
We have obtained families $N(D,f)$ such that each family has a positive density in the limit.
We also know that $R(N(D,f))=Max(D)$ remains constant within each family.
One might wonder if there are other families of numerical semigroups which asymptotically have a positive density. We show that this is essentially determined with $R(S)$ being bounded. Moreover any sequence of numerical semigroups for which $R(S)$ is bounded is obtained by combining finitely many $N(D,f)$ sequences.

\begin{definition}
We define
$$\alpha_n(f)=\frac{\#\{T\mid f(T)=f, R(A(T))=n\}}{2^{f-1}} .$$

Also let
$\alpha_n=\lim_{f\to\infty}\alpha_n(f)$.
\end{definition}

Theorem \ref{Main Result} implies that these limits actually exist, for example $\alpha_{-1}=\gamma$, $\alpha_{1}=\gamma_1$, $\alpha_{2}=\gamma_{2}+\gamma_{1,2}$, $\alpha_3=\gamma_{3}+\gamma_{1,3}+\gamma_{2,3}+\gamma_{1,2,3}$. In general $\alpha_n$ is the sum of $2^{n-1}$ terms, one for each subset of $\{1,2,\dots,n-1\}$.

We use the notation $\overline{f}=\lfloor\frac{f-1}{2}\rfloor$.

\begin{theorem}
$\sum_{n=-1}^{\infty}\alpha_n=1$.
\end{theorem}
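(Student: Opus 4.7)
The plan is a two-sided squeeze on the quantities $\alpha_n(f)$. For every $f\geq 1$ the identity $\sum_{n=-1}^{f-1}\alpha_n(f)=1$ holds tautologically, since each numerical set $T$ with $f(T)=f$ contributes exactly once via $n=R(A(T))$ (possible values: $-1$ if $A(T)=N_f$, and $1,\dots,f-1$ otherwise; in particular $\alpha_0(f)=0$). For each fixed $n$, $\alpha_n(f)$ is a finite sum $\sum_{\mathrm{Max}(D)=n}\mu(N(D,f))$, so Theorem \ref{Main Result} gives $\alpha_n(f)\to\alpha_n$. Applied to a finite prefix, $\sum_{n=-1}^{N}\alpha_n=\lim_{f\to\infty}\sum_{n=-1}^{N}\alpha_n(f)\leq 1$, so $\sum_{n=-1}^{\infty}\alpha_n\leq 1$.

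The harder inequality requires a tail bound on $T_N(f):=\sum_{n=N+1}^{f-1}\alpha_n(f)$ that is uniform in $f$. This is the main obstacle, since pointwise convergence alone cannot exclude mass escaping to infinity, and a naive bound via $|B(D,f)|$ together with $A_D\leq 3^{t-1}$ (Lemma \ref{estimate A}) produces terms of order $(3/2)^n$, which are not summable. The trick is to bound each slice $\{T:m(A(T))=m\}$ directly, as in the proof of Theorem \ref{thm6}: the equivalence-class argument (in both the $m\mid f$ and $m\nmid f$ cases) yields $\#\{T:f(T)=f,\ m(A(T))=m\}\leq(1+f/(m-1))^{m-1}$ for every $2\leq m\leq f-1$. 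Rewriting $1+f/(m-1)=2+(f-m+1)/(m-1)$ and applying the standard inequality $(1+y/k)^k\leq e^y$,
\[
\Bigl(1+\tfrac{f}{m-1}\Bigr)^{m-1}=2^{m-1}\Bigl(1+\tfrac{f-m+1}{2(m-1)}\Bigr)^{m-1}\leq 2^{m-1}\,e^{(f-m+1)/2}.
\]
Dividing by $2^{f-1}$ and setting $j=f-m$ gives a per-slice bound of $\sqrt{e}\,(\sqrt{e}/2)^{j}$. Since $n>N$ corresponds to $m\leq f-N-1$, i.e.\ $j\geq N+1$, summing the geometric series yields
\[
T_N(f)\leq \frac{\sqrt{e}}{1-\sqrt{e}/2}\Bigl(\frac{\sqrt{e}}{2}\Bigr)^{N+1},
\]
a bound independent of $f$ that tends to $0$ as $N\to\infty$ because $\sqrt{e}/2<1$.

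Combining, $\sum_{n=-1}^{N}\alpha_n=\lim_{f\to\infty}(1-T_N(f))\geq 1-\frac{\sqrt{e}}{1-\sqrt{e}/2}(\sqrt{e}/2)^{N+1}$, and letting $N\to\infty$ gives $\sum_{n=-1}^{\infty}\alpha_n\geq 1$, completing the equality. The decisive move is the rewriting $1+f/(m-1)=2(1+(f-m+1)/(2(m-1)))$: pulling the dominant factor of $2$ out of the base before applying $(1+y/k)^k\leq e^y$ is what brings the effective geometric ratio down from something $\geq 1$ to $\sqrt{e}/2<1$ and closes the argument.
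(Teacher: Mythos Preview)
Your proof is correct, and it follows a genuinely different route from the paper's. The paper uses a moving cutoff at $f/M$ (for a large fixed $M$): Theorem~\ref{thm6} with $\epsilon=1-1/M$ kills the tail $\sum_{n\geq f/M}\alpha_n(f)$, and then the explicit error formula from Theorem~\ref{Main Corr} together with Lemma~\ref{estimate A} is used to show $\sum_{n\leq f/M}|\alpha_n(f)-\alpha_n|=o(1)$. Your argument instead keeps a fixed cutoff $N$ and extracts, from the same equivalence-class count in the proof of Theorem~\ref{thm6}, a per-slice bound that is \emph{uniform in $f$}: the rewriting $1+\frac{f}{m-1}=2\bigl(1+\frac{f-m+1}{2(m-1)}\bigr)$ followed by $(1+y/k)^k\leq e^y$ gives $\alpha_n(f)\leq \sqrt{e}\,(\sqrt{e}/2)^n$, which is summable in $n$. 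This sidesteps the need for Theorem~\ref{Main Corr} and the diagonal limit entirely, and is both shorter and more elementary. One cosmetic point: the parenthetical about the $m\mid f$ case is unnecessary, since $m\mid f$ would force $f\in A(T)$, so that case never occurs; the paper's restriction to $m\nmid f$ already covers all relevant $m$.
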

\begin{proof}
Fix a $M$ for which $2^{\frac{1}{M}}\sqrt{3}<2$.
Note that $\sum_{n=-1}^{f-1}\alpha_n(f)=1$. Now Theorem $\ref{thm6}$ with $\epsilon = 1-\frac{1}{M}$ implies that 
$$\sum_{n=\frac{f}{M}}^{f-1}\alpha_n(f)=\frac{\#\{T\mid f(T)=f, m(A(T))\leq (1-\epsilon)f\}}{2^{f-1}} $$
goes to 0 as $f$ goes to infinity. Thus $\sum_{n=\frac{f}{M}}^{f-1}\alpha_n(f)=o(1)$.
And $\sum_{n=-1}^{\frac{f}{M}}\alpha_n(f)=1-o(1)$.

Consider some $n<\frac{f}{2}$,
by Theorem \ref{Main Result} and Theorem \ref{Main Corr} we know that
$$\alpha_n(f)-\alpha_n
=\sum_{D\subseteq\{1,2,\dots,n-1\}}
\sum_{k=\overline{f}+1}^{\infty}
A_{D\cup\{n,k\}}4^{-k}+O\Big(f\Big(\frac{\sqrt{3}}{2}\Big)^f\Big).$$
We know by Lemma \ref{estimate A} that $A_{D\cup\{n,k\}}\leq 3^{k-1}$.
Therefore 
$$|\alpha_n(f)-\alpha_n|\leq 2^{n-1}\sum_{k=\overline{f}+1}^{\infty}\frac{1}{3}\Big(\frac{3}{4}\Big)^k+O\Big(f\Big(\frac{\sqrt{3}}{2}\Big)^f\Big)$$
$$=2^{n-1}\Big(\frac{3}{4}\Big)^{\overline{f}}+ O\Big(f\Big(\frac{\sqrt{3}}{2}\Big)^f\Big).$$

Next we have
$$\sum_{n=1}^{\frac{f}{M}}|\alpha_n(f)-\alpha_n|
\leq \sum_{n=1}^{\frac{f}{M}}2^{n-1}\Big(\frac{3}{4}\Big)^{\overline{f}}+ O\Big(\frac{f^2}{M}\Big(\frac{\sqrt{3}}{2}\Big)^f\Big)$$

$$<2^{\frac{f}{M}}\Big(\frac{3}{4}\Big)^{\frac{f}{2}}+O\Big(f^2\Big(\frac{\sqrt{3}}{2}\Big)^f\Big)
=\Big(\frac{2^{\frac{1}{M}}\sqrt{3}}{2}\Big)^f+O\Big(f^2\Big(\frac{\sqrt{3}}{2}\Big)^f\Big)=o(1).$$

Combining the two it follows that
$$\sum_{n=-1}^{\frac{f}{M}}\alpha_n
=\sum_{n=-1}^{\frac{f}{M}}\alpha_n(f)
-\sum_{n=1}^{\frac{f}{M}}(\alpha_n(f)-\alpha_n)
-(\alpha_{-1}(f)-\alpha_{-1})$$
$$=1-o(1)-o(1)-o(1)=1-o(1).$$ 
Letting $f$ tend to infinity we are done.
\end{proof}

\begin{corollary}\label{Sum to 1}
$\sum_{|D|<\infty}\gamma_D=1$, where $D$ ranges over all finite subsets of positive integers.
\end{corollary}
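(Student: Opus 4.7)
The plan is to deduce this corollary directly from the preceding theorem ($\sum_{n=-1}^{\infty}\alpha_n=1$) by reorganizing the double sum according to the value of $R$. As noted in the introduction, every numerical semigroup $S$ with $f(S)=f$ is uniquely of the form $N(D,f)$ where $D=\{f-s\mid s\in S,\ 1\le s\le f\}$, and $R(S)=\mathrm{Max}(D)$ when $D\ne\emptyset$, while $D=\emptyset$ corresponds to $S=N_f$ (so $R(S)=-1$). In particular, for $f$ sufficiently large relative to $n\ge 1$, the semigroups with $f(S)=f$ and $R(S)=n$ are in bijection with the finite sets $D\subseteq\mathbb{Z}_{>0}$ satisfying $\mathrm{Max}(D)=n$, i.e.\ with subsets of $\{1,2,\dots,n-1\}$ together with $n$.

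Next, for each fixed $n\ge 1$, I would rewrite
\[\alpha_n(f)=\sum_{\substack{D\subseteq\mathbb{Z}_{>0}\\ \mathrm{Max}(D)=n}}\mu(N(D,f)),\]
a sum with at most $2^{n-1}$ terms whose number is independent of $f$. Passing to the limit term-by-term using Theorem \ref{Main Result} gives
\[\alpha_n=\sum_{\substack{D\subseteq\mathbb{Z}_{>0}\\ \mathrm{Max}(D)=n}}\gamma_D,\]
and analogously $\alpha_{-1}=\gamma_\emptyset$ (since $N_f=N(\emptyset,f)$).

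Finally, the families $\{D:\mathrm{Max}(D)=n\}$ for $n\ge 1$ together with $\{\emptyset\}$ partition all finite subsets of $\mathbb{Z}_{>0}$, so summing the identities above over $n\in\{-1,1,2,3,\dots\}$ yields
\[\sum_{|D|<\infty}\gamma_D=\gamma_\emptyset+\sum_{n=1}^{\infty}\sum_{\substack{D\subseteq\mathbb{Z}_{>0}\\ \mathrm{Max}(D)=n}}\gamma_D=\sum_{n=-1}^{\infty}\alpha_n=1,\]
where the last equality is the preceding theorem. There is no real obstacle here; the only subtle point is the interchange of the limit $f\to\infty$ with the sum defining $\alpha_n$, but this is automatic because each such sum is finite with a bound on its number of terms that depends only on $n$.
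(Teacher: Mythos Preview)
Your proof is correct and follows exactly the approach the paper intends: the paper notes just before the preceding theorem that $\alpha_n$ is the sum of the $2^{n-1}$ constants $\gamma_D$ with $\mathrm{Max}(D)=n$, and the corollary is then immediate from $\sum_n\alpha_n=1$. Your write-up simply makes explicit the (finite) term-by-term passage to the limit that justifies $\alpha_n=\sum_{\mathrm{Max}(D)=n}\gamma_D$.
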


\begin{theorem}\label{thm lim}
If $S_n$ is a sequence of numerical semigroups such that\\ $\lim_{n\to\infty}R(S_n)=\infty$ then
$$\lim_{n\to\infty}\mu(S_n)=0.$$
\end{theorem}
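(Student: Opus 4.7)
The plan is to express each $S_n$ uniquely as $N(D_n,f_n)$ with $t_n := \mathrm{Max}(D_n) = R(S_n)$, and then to bound $\mu(S_n)=P(S_n)/2^{f_n-1}$ by splitting the indices according to whether the multiplicity $m(S_n)=f_n-t_n$ lies above or below $f_n/2$. Note that $f_n\ge R(S_n)+1$, so the hypothesis $R(S_n)\to\infty$ automatically forces $f_n\to\infty$, which will be needed in the first case.

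For indices with $m(S_n)\le f_n/2$, every numerical set $T$ contributing to $P(S_n)$ satisfies $m(A(T))=m(S_n)\le f_n/2$. Consequently $P(S_n)$ is dominated by the quantity bounded in Corollary \ref{less than f/2}, yielding $\mu(S_n)\le 2f_n(\sqrt{3}/2)^{f_n}=o(1)$. No additional input beyond Section 2 is needed here.

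For the complementary indices, $m(S_n)>f_n/2$ is equivalent to $f_n\ge 2t_n+1$, which is exactly the hypothesis of Lemma \ref{size of B(D,f)}. Since any $T$ with $A(T)=N(D_n,f_n)$ automatically lies in $B(D_n,f_n)$, one has $P(S_n)\le |B(D_n,f_n)|=A_{D_n}\cdot 2^{f_n-2t_n-1}$, hence $\mu(S_n)\le A_{D_n}\cdot 4^{-t_n}$. Combining this with the estimate $A_{D_n}\le 3^{t_n-1}$ from Lemma \ref{estimate A} gives $\mu(S_n)\le \tfrac{1}{3}(3/4)^{t_n}$, which tends to $0$ because $t_n=R(S_n)\to\infty$.

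Combining the two regimes proves $\mu(S_n)\to 0$. The main obstacle worth flagging is simply the necessity of the case split: both the size formula for $B(D,f)$ and the estimate $A_D\le 3^{t-1}$ require $f\ge 2t+1$, so they break down precisely when the multiplicity is at most $f/2$; that regime genuinely needs the separate small-multiplicity bound of Section 2. Once this split is made, neither half of the argument requires any new estimate beyond what is already proved in the paper.
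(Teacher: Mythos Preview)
Your argument is correct, and it takes a genuinely different route from the paper's. The paper proves Theorem~\ref{thm lim} by first establishing that $\sum_{n}\alpha_n=1$ and then arguing, via an $\epsilon$-tail argument, that $\mu(S_n)\le\alpha_{R(S_n)}(f(S_n))$ must eventually fall into an arbitrarily small tail; this route leans on Theorem~\ref{Main Corr}, Theorem~\ref{Main Result}, and the whole convergence-of-$\alpha_n$ apparatus. Your proof bypasses all of that: it only needs Corollary~\ref{less than f/2}, Lemma~\ref{size of B(D,f)}, and Lemma~\ref{estimate A}, never the existence of the limits $\gamma_D$ or $\alpha_n$, let alone their summability. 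The payoff is an explicit quantitative bound, essentially $\mu(S_n)\le\max\bigl(2f_n(\sqrt{3}/2)^{f_n},\;\tfrac{1}{3}(3/4)^{R(S_n)}\bigr)$, which is strictly more informative than the paper's soft argument. One minor remark on your closing commentary: the estimate $A_D\le 3^{t-1}$ of Lemma~\ref{estimate A} does not itself require $f\ge 2t+1$ (the constant $A_D$ is defined with $f=2t+1$ built in); only the size formula of Lemma~\ref{size of B(D,f)} needs that hypothesis. This does not affect the proof.
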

\begin{proof}
Since $R(S_n)\to\infty$ we have $f(S_n)\to\infty$. Given $\epsilon>0$ there is a $N$ such that $1-\sum_{i=-1}^{N}\alpha_i<\epsilon$. Also there is a $N'$ (depending on $N$) such that $n>N'$  implies $\sum_{i=-1}^{N}|\alpha_i(f(S_n))-\alpha_i|<\epsilon$. Therefore $n>N'$ implies 
$$1-\sum_{i=-1}^{N}\alpha_i(f(S_n))<2\epsilon.$$ 
Also there is a $N''$ such that $n>N''$ implies $R(S_n)>N$.
Then for $n>Max (N',N'')$ we have
$$\mu(S_n)
\leq\alpha_{R(S_n)}(f(S_n))
<1-\sum_{i=-1}^{N}\alpha_{i}(f(S_n))
<2\epsilon.$$
Therefore
$$\limsup_{n\to\infty}\mu(S_n)
\leq 2\epsilon.$$
Since $\epsilon$ was arbitrary we are done.
\end{proof}
\begin{corollary}\label{liminf converse}
If $S_n$ is a sequence of numerical semigroups for which  $$\liminf_{n\to\infty}\mu(S_n)>0$$
then $R(S_n)$ is bounded.
\end{corollary}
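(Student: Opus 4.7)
The plan is to deduce this corollary from Theorem \ref{thm lim} by a direct subsequence/contrapositive argument. Assume toward a contradiction that $\liminf_{n\to\infty}\mu(S_n)>0$ but $R(S_n)$ is unbounded.

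First I would pick $\delta>0$ with $\liminf_{n\to\infty}\mu(S_n)>\delta$, so that $\mu(S_n)>\delta$ holds for all sufficiently large $n$. Since $R(S_n)$ is unbounded, I can extract a subsequence $S_{n_k}$ with $R(S_{n_k})\to\infty$. On this subsequence two things hold simultaneously: $\mu(S_{n_k})>\delta$ for all large $k$ (inherited from the full sequence), and, by applying Theorem \ref{thm lim} directly to the subsequence $S_{n_k}$, $\lim_{k\to\infty}\mu(S_{n_k})=0$. These two statements are incompatible, which yields the contradiction and proves that $R(S_n)$ must be bounded.

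There is no real obstacle here: the hypothesis $\liminf>0$ automatically transfers to every subsequence, and Theorem \ref{thm lim} is stated for an arbitrary sequence of numerical semigroups with $R\to\infty$, so it applies to $S_{n_k}$ without modification. The only small thing worth noting in the write-up is to justify the extraction of the subsequence (since $R(S_n)$ takes nonnegative integer values, ``unbounded'' already gives a subsequence tending to $\infty$), and to remark that passing to a subsequence does not decrease $\liminf$.
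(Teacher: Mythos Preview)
Your argument is correct and is exactly the paper's approach: assume $R(S_n)$ is unbounded, pass to a subsequence with $R\to\infty$, apply Theorem~\ref{thm lim} to obtain $\mu\to 0$ on that subsequence, and contradict the positive $\liminf$. The only difference is that you have spelled out the details (choice of $\delta$, extraction of the subsequence, transfer of $\liminf$) that the paper leaves implicit.
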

\begin{proof}
If $R(S_n)$ is not bounded, then there is a subsequence for which $R(S)$ goes to $\infty$. Now Theorem \ref{thm lim} leads to a contradiction.
\end{proof}

\begin{theorem}\label{thm liminf}
If $S_n$ is a sequence of numerical semigroups for which $R(S_n)$ is bounded above then
$$\liminf_{n\to\infty}\mu(S_n)>0.$$
\end{theorem}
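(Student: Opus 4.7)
The plan is to pass through the bijection $S \leftrightarrow (D(S), f(S))$ noted at the end of Section 1, which translates the boundedness of $R(S_n)$ into a finiteness statement about $D(S_n)$. Specifically, since $\mathrm{Max}(D(S)) = R(S)$, the hypothesis $R(S_n) \leq M$ for some constant $M$ forces every $D(S_n)$ to lie in the finite collection $\mathcal{D}_M := \{\emptyset\} \cup \{D \subseteq \{1,2,\dots,M\} : D \neq \emptyset\}$. So the sequence $S_n$ is assembled from only finitely many "families," each of the shape $\{N(D,f)\}_f$.

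Next, I would argue by contradiction. Suppose $\liminf_n \mu(S_n) = 0$, and extract a subsequence $S_{n_k}$ with $\mu(S_{n_k}) \to 0$. Because $\mathcal{D}_M$ is finite, the pigeonhole principle yields a further subsequence (still denoted $S_{n_k}$) along which $D(S_{n_k}) = D^*$ is constant, so $S_{n_k} = N(D^*, f(S_{n_k}))$. Now split into two cases according as the Frobenius numbers $f(S_{n_k})$ are bounded or unbounded. In the bounded case only finitely many distinct semigroups appear, and each satisfies $\mu(S) \geq 2^{-(f(S)-1)} > 0$ (since $A(S) = S$ forces $P(S) \geq 1$); hence $\mu(S_{n_k})$ cannot tend to $0$, a contradiction. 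In the unbounded case, pass to a further subsequence with $f(S_{n_k}) \to \infty$; then by Theorem \ref{lims are positive} we have $\mu(S_{n_k}) \to \gamma_{D^*} > 0$, again contradicting $\mu(S_{n_k}) \to 0$.

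The main obstacle — really the only thing to pin down — is checking that these two reductions truly exhaust the possibilities and that they yield a \emph{uniform} positive lower bound. A cleaner way to package the argument without contradiction is to note that $\mathcal{D}_M$ is finite, so
\[ \liminf_{n\to\infty} \mu(S_n) \geq \min_{D \in \mathcal{D}_M} \Bigl( \liminf_{\substack{n: D(S_n)=D}} \mu(S_n) \Bigr), \]
with the convention that empty index sets are ignored. For each $D$ appearing infinitely often, the inner $\liminf$ is either $\gamma_D$ (when $f(S_n) \to \infty$ along that subsequence) or a minimum of finitely many positive densities $\mu(N(D,f))$ (when the Frobenius numbers stay bounded), or a minimum of the two. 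In every case the inner $\liminf$ is strictly positive, and since we are taking a minimum over the finite set $\mathcal{D}_M$, so is the outer minimum. This completes the proof.
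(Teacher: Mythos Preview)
Your proof is correct and follows essentially the same approach as the paper: partition the sequence according to the finitely many possible values of $D(S_n)$ and invoke the positivity of $\gamma_D$ from Theorem~\ref{lims are positive} on each piece. You are in fact slightly more careful than the paper's argument in explicitly treating the case where the Frobenius numbers along a subsequence stay bounded.
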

Proof: Say $R(S_n)<M$ for each $n$. Each semigroup can be written as $N(D,f)$ for some $D$ with $Max(D)<M$. There are $2^{M-1}$ such $D$s. Create a separate sub-sequence for each choice of $D$. Now within a sub-sequence remember that $$\lim_{f\to\infty}\mu(N(D,f))=\gamma_{D} .$$ which we have proven to be positive. Therefore each sub-sequence has a positive liminf and hence the original sequence has a positive liminf.

\begin{corollary}\label{lim converse}
If $S_n$ is a sequence of semigroups such that
$$\lim_{n\to\infty}\mu(S_n)=0$$
then $\lim_{n\to\infty}R(S_n)=\infty$.
\end{corollary}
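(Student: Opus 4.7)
The plan is to prove the contrapositive by extracting a bounded subsequence and appealing to Theorem \ref{thm liminf}. Concretely, suppose for contradiction that $\lim_{n\to\infty}\mu(S_n)=0$ but $R(S_n)$ does not tend to infinity. The negation of $R(S_n)\to\infty$ gives some constant $M$ and an infinite subsequence $S_{n_k}$ with $R(S_{n_k})\leq M$ for all $k$.

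Then I would apply Theorem \ref{thm liminf} directly to the subsequence $S_{n_k}$: since $R(S_{n_k})$ is bounded above by $M$, we get $\liminf_{k\to\infty}\mu(S_{n_k})>0$. On the other hand, convergence $\mu(S_n)\to 0$ passes to every subsequence, so $\lim_{k\to\infty}\mu(S_{n_k})=0$, yielding a contradiction. Hence $R(S_n)\to\infty$.

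There is essentially no obstacle here, since the corollary is the contrapositive of Theorem \ref{thm liminf} packaged via a subsequence argument; the only small point to check is that $R(S_n)\not\to\infty$ indeed produces a subsequence with $R$ bounded, which is just the standard characterization of divergence to infinity for a sequence of nonnegative integers. This mirrors exactly the structure used in Corollary \ref{liminf converse}, which extracted a subsequence with $R(S)\to\infty$ from the failure of boundedness; here we extract the opposite type of subsequence from the failure of $R(S_n)\to\infty$.
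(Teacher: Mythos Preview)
Your argument is correct and mirrors the paper's own proof almost verbatim: assume $R(S_n)\not\to\infty$, pass to a bounded subsequence, and invoke Theorem \ref{thm liminf} to contradict $\mu(S_n)\to 0$. You have simply filled in a couple of details (naming the bound $M$ and noting that convergence passes to subsequences) that the paper leaves implicit.
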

\begin{proof}
If $R(S_n)$ does not go to infinity then there will be a subsequence for which $R(S_n)$ is bounded. Then Theorem \ref{thm liminf} leads to a contradiction.
\end{proof}

\begin{conjecture}
If $D_1\neq D_2$ then $\gamma_{D_1}\neq\gamma_{D_2}$.
\end{conjecture}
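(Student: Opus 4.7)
The plan is to leverage the explicit formula from Theorem \ref{Main Result},
$$\gamma_D = A_D\, 4^{-t} - \sum_{k=t+1}^{\infty} A_{D \cup \{k\}}\, 4^{-k}, \qquad t = \text{Max}(D),$$
together with the bounds on $A_D$ from Lemma \ref{estimate A} and Theorem \ref{lims are positive}, to try to reconstruct $D$ from the real number $\gamma_D$. First I would reduce to the case $\text{Max}(D_1) = \text{Max}(D_2)$. Combining the crude bounds $\gamma_D \geq a_t/2^{t+1} \sim 1/(3 \cdot 8^t)$ and $\gamma_D \leq A_D/4^t \leq (3/4)^t/3$ separates $\gamma_{D_1}$ from $\gamma_{D_2}$ whenever $\text{Max}(D_2) > 7.3\, \text{Max}(D_1)$, so the first concrete task is to sharpen these bounds until the sets $\{\gamma_D : \text{Max}(D) = t\}$ occupy pairwise disjoint intervals as $t$ varies. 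This should be tractable by more carefully tracking the sign pattern and magnitudes in the alternating series for $\gamma_D$ and by improving the combinatorial estimates on $A_D$ beyond the worst-case $3^{t-1}$ of Lemma \ref{estimate A}.

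The main difficulty lies in the residual case $\text{Max}(D_1) = \text{Max}(D_2) = t$, where $\gamma_{D_1} = \gamma_{D_2}$ translates into
$$A_{D_1} - A_{D_2} = \sum_{k=t+1}^{\infty} \bigl(A_{D_1 \cup \{k\}} - A_{D_2 \cup \{k\}}\bigr)\, 4^{t-k}.$$
Since the integer $A_D$ alone does not determine $D$ (for instance one checks $A_{\{1\}} = A_{\{2\}} = 1$), the argument must use the entire sequence $(A_{D \cup S})_S$ over finite subsets $S$ of $\{t+1, t+2, \ldots\}$. Iterating the recursion yields the clean identity $A_D = 4^t \sum_S \gamma_{D \cup S}$, so the family $(\gamma_D)$ determines the family $(A_D)$; this reduces the conjecture to the assertion that $D \mapsto (A_D)$ is injective on finite subsets. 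My approach would be induction on $|D|$, attempting to show that the sequence $(A_{D \cup \{k\}})_{k > t}$ already separates $D$ among subsets of maximum $t$.

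The main obstacle is ruling out fortuitous cancellations in the weighted series above: two distinct $D_1, D_2$ could a priori conspire so that $A_{D_1 \cup \{k\}} - A_{D_2 \cup \{k\}}$ combines across all $k > t$ to exactly match the single integer difference $A_{D_1} - A_{D_2}$. Excluding this in general requires a substantially finer understanding of how $B(D, f)$ depends on $D$ than Lemma \ref{estimate A} provides, and I do not see a purely analytic obstruction that forbids such a cancellation without additional combinatorial input. A useful fallback is that the tail bound $\sum_{k > N} A_{D \cup \{k\}} 4^{-k} \leq (3/4)^N$ lets one certify $\gamma_{D_1} \neq \gamma_{D_2}$ numerically for any pair with $\text{Max}(D_i)$ bounded, which is how the values in Table \ref{t1} were computed; all such checks are consistent with the conjecture.
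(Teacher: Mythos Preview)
The paper states this as a \emph{Conjecture} and offers no proof; it is left open. So there is no argument in the paper to compare your proposal against, and what you have written is a strategy sketch rather than a proof --- you say as much yourself when you note that you ``do not see a purely analytic obstruction'' to the key cancellation.

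Two concrete problems with the sketch. First, the reduction step is not valid. The identity $A_D = 4^{t}\sum_{S}\gamma_{D\cup S}$ (summing over finite $S\subseteq\{t+1,t+2,\dots\}$) does show that the \emph{entire function} $D\mapsto\gamma_D$ determines the function $D\mapsto A_D$. But the conjecture concerns a single coincidence $\gamma_{D_1}=\gamma_{D_2}$; from that equality alone the identity gives no relation between $A_{D_1}$ and $A_{D_2}$, since each depends on infinitely many \emph{other} values $\gamma_{D_i\cup S}$. So ``injectivity of $D\mapsto A_D$'' is neither implied by nor sufficient for the conjecture, and the reduction collapses. Second, your illustrative example is off: one checks directly from the definition that $A_{\{1\}}=1$ but $A_{\{2\}}=2$. (Non-injectivity of $D\mapsto A_D$ is nevertheless true; for instance $A_{\{1\}}=A_{\{1,2\}}=1$.)

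What remains of the proposal --- separating the ranges of $\gamma_D$ by $\mathrm{Max}(D)$ via sharpened two-sided bounds, and then attacking the equal-maximum case through the displayed series identity --- is a reasonable line of attack, but as written it does not get past the acknowledged obstacle, and the paper provides no further leverage.
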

A consequence of this conjecture would be the following
\begin{corollary}
If $S_n$ is a sequence of distinct numerical semigroups then the sequence $\mu(S_n)$ converges if and only if either $R(S_n)\to\infty$ or there exist $D$ such that $S_n$ is eventually a sub-sequence of $N(D,f)$.
\end{corollary}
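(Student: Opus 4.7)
The plan is to split into cases according to the value $L=\lim_{n\to\infty}\mu(S_n)$ and to invoke the conjecture only in the positive-limit case. Everything else is already handled by the results of this paper.

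First I would dispatch the (easier) \emph{if} direction. If $R(S_n)\to\infty$, then Theorem \ref{thm lim} gives $\mu(S_n)\to 0$. Otherwise, suppose there is a fixed finite $D$ and an index $N_0$ such that $S_n=N(D,f(S_n))$ for all $n\geq N_0$. Since the $S_n$ are distinct and $R(S_n)=Max(D)$ is constant, and since for any fixed Frobenius number only finitely many numerical semigroups exist, the Frobenius numbers $f(S_n)$ must tend to infinity. Theorem \ref{Main Result} then yields $\mu(S_n)\to\gamma_D$, which is a convergent sequence.

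For the \emph{only if} direction, assume $\mu(S_n)\to L$. If $L=0$, Corollary \ref{lim converse} immediately gives $R(S_n)\to\infty$ and we are done. So assume $L>0$. Then Corollary \ref{liminf converse} implies $R(S_n)$ is bounded, say by $M$. Write $S_n=N(D_n,f(S_n))$ with $D_n\subseteq\{1,\dots,M\}$, so the $D_n$ range over a finite set. It suffices to show $(D_n)$ is eventually constant. Suppose, for contradiction, that two distinct sets $D$ and $D'$ each appear infinitely often in $(D_n)$. Along the subsequence with $D_n=D$, distinctness of the $S_n$ together with boundedness of $R(S_n)$ forces $f(S_n)\to\infty$, whence Theorem \ref{Main Result} yields $\mu(S_n)\to\gamma_D$ on this subsequence. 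The same argument along $D_n=D'$ gives limit $\gamma_{D'}$. By the conjecture $\gamma_D\neq\gamma_{D'}$, contradicting convergence of the full sequence $\mu(S_n)$. Hence $(D_n)$ must stabilize, as required.

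The sole place where the conjecture actually enters is the final contradiction step above. Without the injectivity of $D\mapsto\gamma_D$ one could only conclude that $D_n$ is eventually drawn from the level set $\{D:\gamma_D=L\}$, not that it stabilizes at a single value; this is therefore the crux, and in fact the only obstacle, since every other ingredient is already supplied by Theorem \ref{thm lim}, Theorem \ref{Main Result}, and Corollaries \ref{liminf converse} and \ref{lim converse}.
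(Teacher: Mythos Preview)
Your argument is correct and is precisely the derivation the paper has in mind: the corollary is stated there without proof, merely as ``a consequence of this conjecture,'' and your proof supplies exactly the expected details. The reduction to the cases $L=0$ and $L>0$, invoking Corollary~\ref{lim converse} in the former and Corollary~\ref{liminf converse} together with Theorem~\ref{Main Result} and the conjecture in the latter, is the natural route; there is nothing to compare against in the paper itself.

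One small remark on exposition: in the \emph{if} direction, case two, the clause ``since for any fixed Frobenius number only finitely many numerical semigroups exist'' is more than you need. Once $D$ is fixed, the map $f\mapsto N(D,f)$ is injective, so distinctness of the $S_n$ already forces the $f(S_n)$ to be pairwise distinct positive integers, hence to tend to infinity. Your argument is not wrong, just slightly heavier than necessary.
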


\section{Acknowledgements}
Thanks to San Diego State University REU 2019 for introducing us to some of the questions investigated in this paper. Thanks to Dr Chris O'Neill and Dr David Galvin for giving us useful comments on it.

\end{document}